\title{Counterexamples to $ C^{\infty} $ well posedness for some
  hyperbolic operators\\ with triple characteristics}
\author{Enrico Bernardi}
\date{}
\address{Dipartimento di Scienze Statistiche, Universit\`a di Bologna,}
\address{Viale Filopanti 5, 40126 Bologna, Italia}
\email{enrico.bernardi@unibo.it}
\author{Tatsuo  Nishitani}
\address{Department of Mathematics, Osaka University,}
\address{ Machikaneyama 1-16, Toyonaka, 560-0043, Osaka, Japan}
\email{nishitani@math.sci.osaka-u.ac.jp}
\chardef\bslash=`\\ 
\DeclareMathOperator{\im}{Im}
\DeclareMathOperator{\re}{Re}
\def\dif{\partial}
\def\R{\mathbb{R}}
\def\C{\mathbb{C}}
\def\lr#1{\langle{#1}\rangle}
\newcommand{\eval}[2][\right]{\relax
  \ifx#1\right\relax \left.\fi#2#1\rvert}
\DeclarePairedDelimiter{\inner}{\langle}{\rangle}
\begin{document}

\begin{abstract}{In this paper we prove that for a class of non-effectively hyperbolic operators with smooth triple characteristics the Cauchy problem is well posed in the Gevrey 2 class, beyond the generic Gevrey class $ 3/2 $ ( see e.g. \cite{Bro}). Moreover we show that this value is optimal.}
\end{abstract}

\maketitle
\renewcommand{\sectionmark}[1]{}
\markboth{Enrico Bernardi and Tatsuo Nishitani}
{Counterexamples to $ C^{\infty} $ well posedness}
\section{Introduction}
\newtheorem{proposition}{Proposition}[section]
\newtheorem{lemma}{Lemma}[section]
\newtheorem{theorem}{Theorem}[section]

Hyperbolic operators with double characteristics have been thoroughly
investigated in the past years, and at least in the case when there is no
transition between different types on the set where the principal symbol
vanishes of order $ 2 $, essentially everything is known, see e.g.
\cite{NBook} and \cite{BNJ} for a general survey and \cite{IP} and \cite{Ho1} for
classical introductions. The
algebraic classification of the spectrum of the fundamental matrix of the principal
symbol evaluated at a double point allows us to deduce the behavior
of the operator in the $ C^{\infty} $ and Gevrey categories as far as
the well posedness of the Cauchy problem is concerned. In particular,
when real eigenvalues exist, the so called effectively hyperbolic
case, then we have well posedness regardless of the lower order terms.

These  spectral invariants are not available in general when studying operators with
symbols vanishing of order greater or equal to $ 3 $, therefore
much less is known in this case. There is one object, though, that allows some
classification even in these cases, namely the propagation cone of the
principal symbol, i.e. the symplectic dual of the hyperbolicity cone.

More precisely we recall that, denoting by $ p_{z} $ the localization
of the principal symbol $ p $ of $ P(x,D) $ at a multiple point $ z $,
the propagation cone $ C_{z} $ is defined by
\[
C_{z} = \{X \in T_{z}(T^{*}\R^{n+1})| \sigma(X,Y) \leq 0, ~~ \forall Y
\in \Gamma_{z}\},
\]

where the hyperbolicity cone $  \Gamma_{z} $ is defined as the
connected component of $ N = (0;1,\ldots,0) $ of the set $ \{X\in
T_{z}(T^{*}\R^{n+1})|p_{z}(X) \neq 0\} $, assuming that $ p(x,\xi) $
is hyperbolic with respect to $ \xi_{0} $.

When $ C_{z} $  happens to be transversal to the tangent plane to the
manifold of multiple points, we are again effectively hyperbolic as it
were,
i.e. if characteristics are double, it can be shown that this is
equivalent to  the spectrum of the fundamental
matrix  containing real eigenvalues (\cite{N94}, \cite{Ho1}). When
this happens in a higher order multiplicity situation and the lower
order terms satisfy a generic Ivrii-Petkov vanishing condition, it is
known that we have well posedness in $ C^{\infty} $. 
See \cite{KNW} for a very complete analysis of this situation and
\cite{BBPK} for some new recent  results in triple characteristics
of an effectively hyperbolic type.
One strongly suspects that when this transversality condition fails, it may be always
possible to choose some suitable lower order terms satisfying Ivrii-Petkov
conditions and still end up with an ill posed problem in $  C^{\infty}
$.

At least in the case of triple characteristics this behavior has been proved in
a number of papers, see e.g. \cite{BGE}, \cite{Iwa}, \cite{N94}, but
the principal symbol had to satisfy some strong
factorization conditions, where one or all of the roots had to be $ C^{\infty} $. 

In this paper we  prove a well posedness result in the Gevrey
category for a simple model hyperbolic operator with triple characteristics,
when however there are no regular roots, i.e. the principal symbol
cannot be smoothly factorized, and moreover whose propagation cone is not
transversal to the triple manifold, thus confirming that conjecture,
albeit for a limited class of operators. On the other hand here we are
able not only to disprove $ C^{\infty} $ well posedness, but we can
actually estimate the precise Gevrey threshold where well posedness
will cease to hold, by exhibiting a special class of solutions, through
which we can violate weak necessary solvability conditions.
This threshold will appear at $ s=2 $, thus beyond the canonical value
of $ s =\frac{3}{2} $ dictated by the classical result of Bronshtein,
\cite{Bro}.
The choice of the lower order terms will be the easiest possible,
i.e. zero. It is thus all the more surprising that a very regular
operator, with analytic (polynomial) coefficients, and reduced just to its
principal symbol should have this bad behavior, with respect to $
C^{\infty} $ well posedness.

We consider the operator

\begin{equation}
  \label{eq:10}
  P(x,D) = D_{0}^{3} - \Omega(x,D') D_{0} + B(x,D')
\end{equation}

with 
$$ \Omega(x,D') = D_{1}^{2} + x_{1}^{2}D_{n}^{2} 
$$ and $ B(x,D') =
-b_{0}x_{1}^{3}D_{n}^{3}$.

Here $ x = (x_0,x_1, \ldots, x_n) = (x_{0},x')=
(x_{0},x_{1},x'',x_{n}) \in \R^{n+1} $ and the local
estimates below will be proven in a neighborhood of $ x=0 $.
Clearly hyperbolicity is equivalent to $ b_{0}^{2} \leq \frac{4}{27}
$. We will also assume that the principal symbol vanishes exactly of
order $ 3 $ on the triple manifold $ \Sigma_{3} $, thus we will require  $ |b_{0}| <
\frac{2}{3\sqrt{3}} $, i.e. outside  $ \Sigma_{3} $ $ P $ is strictly hyperbolic.

Let us recall that we say that $f(x)\in C^{\infty}({\mathbb R}^n)$ belongs to $\gamma^{(s)}({\mathbb R}^n)$, the Gevrey space of order $s$, where $ s \geq 1 $,  if for any compact set $K\subset {\mathbb R}^n$ there exist $C>0$, $h>0$ such that
\begin{equation}
\label{Gevreyineq}
\bigl|\dif^{\alpha}_xf(x)\bigr|\leq Ch^{|\alpha|}|\alpha|!^s,\quad x\in K
\end{equation}
for every $\alpha\in {\mathbb N}^n$. In particular
$\gamma^{(1)}({\mathbb R}^n)$ is the space of real analytic functions
on ${\mathbb R}^n$.

We also recall  that the Cauchy problem for $P$ is said to be  locally solvable in $\gamma^{(s)}$ at the origin if for any $\Phi=(u_0,u_1)\in (\gamma^{(s)}({\mathbb R}^n))^2$, there exists a neighborhood $U_{\Phi}$ of the origin such that the Cauchy problem
\[
\left\{\begin{array}{ll}
Pu=0\quad \mbox{in~} U_{\Phi}\\
D_0^ju(0,x')=u_j(x'),\quad j=0,1,\quad x\in U_{\Phi}\cap\{x_0=0\}
\end{array}\right.
\]
has a solution $u(x)\in C^{\infty}(U_{\Phi})$.

The main results in this paper are then precisely stated:

\begin{theorem}
\label{thm:main:1}
Assume that $0 < |b_{0}| <  \frac{2}{3\sqrt{3}}$. Then the Cauchy problem for $P$ is well posed in the Gevrey $2$ class.
\end{theorem}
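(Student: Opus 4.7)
The plan is to prove an a priori energy estimate for $P$ in a scale of weighted Gevrey-$2$ Sobolev spaces, from which well posedness follows by the standard duality and approximation argument. I would first rewrite $Pu=f$ as a first-order $3\times 3$ pseudodifferential system in $D_{0}$: setting $U={}^{t}(\langle D'\rangle^{2}u,\langle D'\rangle D_{0}u,D_{0}^{2}u)$, one obtains $D_{0}U=\mathcal{A}(x,D')U+F$ whose principal symbol $A$ is the companion matrix of
\[
q(\tau;x_{1},\xi_{1},\xi_{n})=\tau^{3}-(\xi_{1}^{2}+x_{1}^{2}\xi_{n}^{2})\tau-b_{0}x_{1}^{3}\xi_{n}^{3}.
\]
The roots $\lambda_{1}\le\lambda_{2}\le\lambda_{3}$ of $q$ are real under the hyperbolicity hypothesis $|b_{0}|<2/(3\sqrt{3})$, coincide only on $\Sigma_{3}=\{x_{1}=\xi_{1}=0\}$, and are homogeneous of degree $1$ in $(\xi_{1},x_{1}\xi_{n})$ with respect to the quasi-homogeneous scaling $(\xi_{1},x_{1})\mapsto(\rho\xi_{1},\rho x_{1})$, $\xi_{n}\mapsto\xi_{n}$.

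The main technical step is to construct a quasi-symmetrizer $S(x,\xi)$ for $A$ in the sense of Bronshtein, i.e.\ an Hermitian symbol satisfying $c\,I\le S\le C\,I$ together with
\[
\IM(SA)\le C\,\Delta(x_{1},\xi_{1},\xi_{n})\,S,
\]
where $\Delta$ is the square root of the discriminant $\Delta^{2}=(\lambda_{1}-\lambda_{2})^{2}(\lambda_{2}-\lambda_{3})^{2}(\lambda_{1}-\lambda_{3})^{2}$ of $q$. A direct computation using the specific quasi-homogeneous form of $q$ shows that $\Delta(x_{1},\xi_{1},\xi_{n})\lesssim(\xi_{1}^{2}+x_{1}^{2}\xi_{n}^{2})^{1/2}\lesssim\langle\xi_{n}\rangle^{1/2}(\xi_{1}^{2}\langle\xi_{n}\rangle^{-1}+x_{1}^{2}\langle\xi_{n}\rangle)^{1/2}$, so the quasi-symmetrizer estimate generates a loss of at most $\langle\xi_{n}\rangle^{1/2}$ derivatives per unit time. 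This $1/2$-loss is precisely the Gevrey-$2$ threshold: multiplying by a weight of the form
\[
e^{\Phi},\qquad \Phi(x_{0},\xi)=(T-x_{0})\,\gamma\,\langle\xi_{n}\rangle^{1/2},
\]
for suitable $\gamma,T>0$, converts the putative loss into a favorable sign in the energy inequality, since the derivative of $\Phi$ in $x_{0}$ contributes $-\gamma\langle\xi_{n}\rangle^{1/2}S$ and dominates $C\Delta S$ once $\gamma$ is chosen large. Because $\Phi$ grows like $\langle\xi_{n}\rangle^{1/2}$, conjugation by $e^{\Phi}$ is admissible on functions of Gevrey class $2$ but not below, which explains the optimal index.

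Carrying this out rigorously forces one to use a pseudodifferential calculus with a large parameter $\lambda=\langle\xi_{n}\rangle^{1/2}$, in which the symbol class is graded by powers of $\lambda$ and the quasi-homogeneous scaling $x_{1}=\lambda^{-1}y_{1}$, $\xi_{1}=\lambda\eta_{1}$ makes all error terms of lower order in $\lambda$. I would work microlocally in three zones: (i) outside a conic neighborhood of $\Sigma_{3}$, where $P$ is strictly hyperbolic and a standard symmetrizer applies; (ii) in an intermediate zone where one of the three roots is separated from the other two, reducing to a double-characteristic analysis; (iii) the inner zone $|y_{1}|+|\eta_{1}|\lesssim 1$, where the problem becomes the model system and the quasi-symmetrizer constructed above gives the desired bound. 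Gluing the three estimates via a Littlewood-Paley decomposition in $\xi_{n}$ and integrating in $x_{0}$ yields
\[
\sum_{j=0}^{2}\|e^{\Phi}D_{0}^{j}u(x_{0},\cdot)\|\lesssim\sum_{j=0}^{1}\|e^{\Phi}D_{0}^{j}u(0,\cdot)\|+\int_{0}^{x_{0}}\|e^{\Phi}f(s,\cdot)\|\,ds,
\]
which is the Gevrey-$2$ well-posedness estimate.

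The hard part will be the algebraic construction of $S$ and the verification of the quasi-symmetrizer inequality with the sharp constant that gives exponent exactly $1/2$ and not more in the estimate for $\Delta$. Because the propagation cone is not transversal to $\Sigma_{3}$, the ``drift'' along bicharacteristics produces terms that are not obviously absorbable, and one must exploit the exact coefficient structure of $\Omega$ and $B$ (in particular the fact that the Hamiltonian flow of $q$ preserves the quasi-homogeneous structure and that $b_{0}$ is small enough to keep the roots well separated away from $\Sigma_{3}$) to close the inequality. Once the weighted estimate is established, Gevrey-$2$ well posedness follows by the familiar argument of approximating Gevrey data by analytic data, solving the analytic Cauchy problem, and passing to the limit using the uniform $e^{\Phi}$ bounds.
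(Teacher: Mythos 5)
Your route is genuinely different from the paper's. The paper does not pass to a first--order system nor construct a Bronshtein--type quasi--symmetrizer: it proves the estimate directly, by pairing $Pu$ with the explicit scalar multiplier $M=D_0^2-\tfrac{1}{3}\Omega$, completing a square in the resulting energy $\mathcal E(u)$ (which uses the exact hyperbolicity constant, since $1-\tfrac{27}{4}b_0^2>0$), and exploiting the harmonic--oscillator identity $\re\langle D_1^2u,x_1^2u\rangle=\|x_1D_1u\|^2-\|u\|^2$ to identify the only genuinely negative term $-\tfrac{4}{9}\xi_n^2\|u\|^2$. That term, plus the commutator remainder, is then absorbed via the weight $W=\exp(2\tau\langle\xi_n\rangle^{1/s}(x_0-a))$ precisely when $1+\tfrac{2}{s}\geq 2\geq 4-\tfrac{4}{s}$, i.e.\ $s\leq 2$. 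No microlocalization in zones, no Littlewood--Paley, no quasi--homogeneous pseudodifferential calculus. Existence then comes from a Hahn--Banach duality argument with the same weighted estimate applied to $P=P^*$.

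There is, however, a real gap in your argument, and it sits exactly where you located ``the hard part.'' Your bound
\[
\Delta(x_1,\xi_1,\xi_n)\lesssim(\xi_1^2+x_1^2\xi_n^2)^{1/2}
\]
is dimensionally wrong. With $\Delta^2=\prod_{i<j}(\lambda_i-\lambda_j)^2=4(\xi_1^2+x_1^2\xi_n^2)^3-27b_0^2x_1^6\xi_n^6$, the quantity $\Delta$ is homogeneous of degree $3$ in $(\xi_1,x_1\xi_n)$; already at $x_1=0$ the three roots are $0,\pm|\xi_1|$ and $\Delta=2|\xi_1|^3$, so the correct bound is $\Delta\lesssim(\xi_1^2+x_1^2\xi_n^2)^{3/2}$, not the $1/2$ power you wrote. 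Since the claimed $\langle\xi_n\rangle^{1/2}$ loss per unit time --- and hence the Gevrey-$2$ threshold --- is deduced from that incorrect estimate, the mechanism by which your proposal would beat the generic Bronshtein index $m/(m-1)=3/2$ is not actually there. The improvement from $3/2$ to $2$ is the whole content of Theorem \ref{thm:main:1}, and it comes from the special structure of $\Omega$ and $B$: the paper's completion of squares and the commutator $[D_1^2,x_1^3]$ produce remainder terms whose powers of $\xi_n$ are low enough that the weight with $s=2$ still closes. To salvage your proposal you would need to explain, within the quasi--symmetrizer framework, exactly how this structure lowers the loss from $\langle\xi_n\rangle^{2/3}$ (generic Bronshtein) to $\langle\xi_n\rangle^{1/2}$; the statement ``a direct computation\ldots shows'' currently covers the whole point.
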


That this is actually the best one can hope for is proven in
\begin{theorem}
\label{thm:main:2}
If $s>2$, it is possible to choose $ b_{0} \in ]0,\frac{2}{3\sqrt{3}}[
$ such that  the Cauchy problem for $P$ is not locally solvable at the origin in the Gevrey $s$ class.
\end{theorem}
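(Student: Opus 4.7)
The strategy is to exhibit, for every $s>2$, a family of approximate solutions $u_\lambda$, $\lambda\to+\infty$, whose values at $x_0=T>0$ grow like $e^{c\lambda^{1/2}}$ while their Cauchy data and the residual $Pu_\lambda$ stay compatible with Gevrey $s$ estimates, violating a standard weak necessary condition for local solvability in $\gamma^{(s)}$. Since $1/2>1/s$ for $s>2$, such a family is incompatible with any a priori inequality that would follow from solvability.

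\emph{Step 1 (rescaling to a model operator).} For $\lambda\gg1$, substitute $u(x)=e^{i\lambda x_n}\,w(\lambda^{1/2}x_0,\lambda^{1/2}x_1)$ into $P$; a direct computation gives
\[
Pu=\lambda^{3/2}\,e^{i\lambda x_n}\,Lw,\qquad L=D_{y_0}^{3}-(D_{y_1}^{2}+y_1^{2})D_{y_0}-b_0\,y_1^{3},
\]
with $L$ independent of $\lambda$. The rescaling $(x_0,x_1)=(\lambda^{-1/2}y_0,\lambda^{-1/2}y_1)$ is the unique one balancing the three homogeneous pieces of the principal symbol; the resulting exponent $1/2$, matched against the frequency $\lambda$ in $x_n$, is precisely the scaling attached to Gevrey $2$ and will fix the threshold.

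\emph{Step 2 (complex resonance of the model).} Seeking model solutions of the form $w(y_0,y_1)=e^{-i\mu y_0}\phi(y_1)$ turns $Lw=0$ into the non-self-adjoint one-dimensional spectral problem
\[
\bigl[D_{y_1}^{2}+y_1^{2}-(b_0/\mu)\,y_1^{3}\bigr]\phi=\mu^{2}\phi.
\]
Under $0<b_0<2/(3\sqrt{3})$ the potential $V_\mu(y_1)=y_1^{2}-(b_0/\mu)y_1^{3}$ exhibits a confining wall as $y_1\to-\infty$, a well around the origin, a barrier strictly above the well bottom, and then plunges to $-\infty$ in the outgoing direction. A WKB / complex-scaling analysis of this cubic perturbation of the harmonic oscillator yields a resonance $\mu_0=\mu_R+i\mu_I$ with $\mu_I>0$ and an associated eigenfunction $\phi_0$ decaying as $y_1\to-\infty$ and purely outgoing beyond the barrier. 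The freedom to choose $b_0$ in the admitted range is used to place this resonance in a regime where both $\mu_I>0$ and sufficiently sharp decay of $\phi_0$ are guaranteed.

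\emph{Step 3 (quasi-mode and contradiction).} Fix $s_0\in(1,2)$ and let $\chi$ be a $\gamma^{(s_0)}$ cutoff equal to $1$ near the origin. Set
\[
u_\lambda(x)=\chi(x)\,e^{i\lambda x_n-i\mu_0\lambda^{1/2}x_0}\,\phi_0(\lambda^{1/2}x_1).
\]
Then $|u_\lambda(T,0,\ldots,0)|$ is at least of order $e^{\mu_I T\lambda^{1/2}}$; the Cauchy data $D_0^{j}u_\lambda(0,\cdot)$, $j=0,1,2$, have $\gamma^{(s)}$-norms at most of order $e^{C\lambda^{1/s}}$; and $Pu_\lambda$ — which collects only the commutator of $P$ with $\chi$ and the truncation of the outgoing tail of $\phi_0$ — admits a $\gamma^{(s_0)}$ bound of order $\lambda^{N}e^{-c\lambda^{1/s_0}}$. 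Local solvability in $\gamma^{(s)}$ at the origin would imply, by a standard duality argument, a weak necessary inequality
\[
|u(T,0,\ldots,0)|\le C_T\,e^{c'\lambda^{1/s}}\Bigl(\sum_{j=0}^{2}\|D_0^{j}u(0,\cdot)\|_{\gamma^{(s)}}+\|Pu\|_{\gamma^{(s)}}\Bigr)
\]
for $u$ microlocally concentrated at $|\xi_n|\sim\lambda$; applied to $u_\lambda$ this forces $e^{\mu_I T\lambda^{1/2}}\le C\,e^{c''\lambda^{1/s}}$, impossible for $s>2$ as $\lambda\to\infty$. The main obstacle is Step 2: producing the complex eigenvalue $\mu_0$ of the cubic oscillator with the correct sign of $\mu_I$ and sharp enough decay of $\phi_0$ so that the cutoff error respects the stated Gevrey $s_0$ bound; the choice of $b_0\in\,]0,2/(3\sqrt{3})[\,$ is exploited precisely here.
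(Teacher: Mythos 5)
Your Step 1 (the $\lambda^{1/2}$ rescaling, leading to the $\lambda$-independent model $L=D_{y_0}^3-(D_{y_1}^2+y_1^2)D_{y_0}-b_0y_1^3$) and the overall strategy (a family of solutions growing like $e^{c\lambda^{1/2}}$ versus Gevrey $s$ data of size $e^{C\lambda^{1/s}}$, incompatible for $s>2$) are the same as in the paper, and correct. The decisive gap is in Step 2. You assert that ``a WKB / complex-scaling analysis of this cubic perturbation of the harmonic oscillator yields a resonance $\mu_0=\mu_R+i\mu_I$ with $\mu_I>0$,'' but this is the entire technical content of the theorem and cannot be taken for granted. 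Note that the spectral problem you write down is \emph{nonlinear} in the eigenvalue: the potential $y_1^2-(b_0/\mu)y_1^3$ itself depends on $\mu$, so the standard resonance picture (real confining wall, well, barrier) does not apply --- the potential is complex as soon as $\mu$ is, and the whole ``barrier/outgoing'' intuition becomes meaningless. The paper handles this by an explicit linear change of variables $y=Ax_1+B$ (and the specific choice $b_0=\sqrt{2}/(3\sqrt{3})$, which kills the constant term) reducing the equation to Sibuya's normal form $w''=(y^3+\zeta y)w$; the existence of the desired complex ``eigenvalue'' is then exactly the statement that the Stokes coefficient $C_0(\zeta)$ has a zero, which is proved via Picard's Little Theorem applied to the Stokes identity (Lemma \ref{lem:11:0:a}); and the sign of what you call $\mu_I$ is controlled by locating the argument of that zero in $(\pi,\tfrac{19}{15}\pi]$ via a symmetry of $C_0$ and the zero-free regions of \cite{NZ} (Lemmas \ref{lemma:sym} and \ref{local}). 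None of these essential steps appear in your proposal, and there is no evident elementary WKB shortcut around them, precisely because of the $\mu$-dependence of the potential.

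There is a second, subtler, gap in Step 3. You treat $\phi_0$ as a resonance with an ``outgoing tail'' and truncate it, claiming $\|Pu_\lambda\|\lesssim\lambda^Ne^{-c\lambda^{1/s_0}}$. This is misguided in both directions. When the Stokes-coefficient condition $C_0(\zeta_0)=0$ holds, the relation $\mathcal{Y}_0=-\omega\mathcal{Y}_2$ makes the relevant solution subdominant (superexponentially decaying) along \emph{both} real directions $y_1\to\pm\infty$ after the change of variables; so $w(\cdot,\zeta_0)$ is Schwartz and you do not need a cutoff in $x_1$ at all. Conversely, if your $\phi_0$ really grew past a barrier, then the cutoff commutator $[P,\chi]$ is supported at $|x_1|\sim\epsilon$, i.e.\ $|y_1|\sim\epsilon\lambda^{1/2}$, where a growing $\phi_0$ would make $Pu_\lambda$ \emph{large}, not small, and the duality inequality you invoke would give no contradiction. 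The paper avoids all quasi-mode error analysis by using exact null solutions $U_\lambda$ of $P$, pairing them against the hypothetical solution $u$ of the Cauchy problem, and using Holmgren's theorem to control the support of $u$ --- so the only remaining terms are boundary terms, which are explicitly $O(\lambda^2)$, and the claimed contradiction follows from the growth of $\hat\theta(\lambda)e^{iT\lambda^{1/2}Re^{i\theta}}$ for data $\theta\notin\gamma^{(2)}$.
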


The paper is organized as follows: in Section
\ref{sec:estim-gevr-class} we prove a simple, classical energy
estimate for our model operator in the Gevrey $ s $ category, with $ s
\leq 2$ which proves Thoerem \ref{thm:main:1}.

In Section \ref{sec:optim-gevr-index} we recall a number of results
from \cite{Si}, \cite{NZ} and \cite{BNJ} on the entire functions
related to a Stokes phenomenon for an important ODE associated
with the necessary conditions. We finally prove Theorem
\ref{thm:main:2} via a standard functional analytic argument, involving exponential
estimates.

Eventually in Section \ref{sec:factorization} we verify that the
geometrical conditions on the propagation cone and the regularity of
the roots for the principal symbol of our model hold true.

\section{Estimates in Gevrey classes}
\label{sec:estim-gevr-class}

We define $\displaystyle \inner{u,v} =
\int_{\R^{n}}\hat{u}(x_{0},x_{1},x',\xi_{n})\overline{\hat{v}}(x_{0},x_{1},x',\xi_{n})dx_{1}dx'
$ with $ \hat{u} $ denoting the partial Fourier transform with respect
to $ x_{n} $. In a similar way we have for the $ L^{2} $ norm $ \displaystyle \|u\|^{2} = \int_{\R^{n}}|\hat{u}(x_{0},x_{1},x',\xi_{n})|^{2}dx_{1}dx' $.

Since we are dealing a rather simple and straightforward model
operator we are not going to deploy the techniques used e.g. in \cite{BNJ} of
Weyl-Gevrey calculus of pseudo-differential operators.We could
certainly  apply them here, but at the price of rendering the computations very
heavy, and then  for a very limited advantage in generality.

Therefore the symbol $ W(x_0) = \exp(2\tau\lr{\xi_{n}}^{\frac{1}{s}}(x_{0}-a)) $ with some $a>0$ defined below
will function as a  Gevrey weight in a naive, still correct, way. 

The harmonic oscillator $ \Omega $ is defined as $ \Omega =
D_{1}^{2} + x_{1}^{2}\xi_{n}^{2} $.

 Before dealing
with the operator (\ref{eq:10}) itself, we need a preliminary result
on the multiplier operator $ M $. Let
\[
E_j(u(x_0))=\|D_0^ju(x_0)\|^2+\|D_1^ju(x_0)\|^2+\|(x_1\xi_n)^ju(x_0)\|^2.
\]

Assuming that $ \theta > 0 $ we start by proving the following 

\begin{lemma}
\label{lemma:2.1}
  Let $ M = D_{0}^{2} - \theta\Omega $. Then there esists  $C > 0 $ such
  that for any $  s \geq 1, s\in \R $ and any $  \tau  $ large
  enough we have for any $u\in C_0^{\infty}(\R^n)$
  \begin{equation}
    \label{eq:22}
    \begin{split}
    \int_{0}^{\infty}W\|Mu\|^{2}dx_{0} \geq
   CW(0)\big(\tau\lr{\xi_{n}}^{\frac{1}{s}}E_1(u(0))
   +\tau^3\lr{\xi_{n}}^{\frac{3}{s}}E_0(u(0))\big)\\
   + C\tau^{2}\int_{0}^{\infty}W\lr{\xi_{n}}^{\frac{2}{s}}E_1u(x_0))dx_{0}
   +C\tau^4\int_0^{\infty}W\lr{\xi_{n}}^{\frac{4}{s}}E_0(u(x_0))dx_0,
   \end{split}
  \end{equation}
where $ W = \exp(2\tau\lr{\xi_{n}}^{\frac{1}{s}}(x_{0}-a)) $ and $\lr{\xi_n}=\sqrt{1+\xi_n^2}$.
\end{lemma}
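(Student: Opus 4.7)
My plan is to prove the estimate by applying two multipliers --- $D_0 u$ and $\rho^2 u$, with $\rho := \tau\lr{\xi_n}^{1/s}$ --- to $WMu$, integrating by parts in $x_0$ (using $W' = 2\rho W$), and combining the resulting identities linearly so that the negative bulk and boundary terms cancel.

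For the first multiplier, the pointwise identities $\im\inner{D_0^2 u, D_0 u} = -\tfrac12 \partial_0 \|D_0 u\|^2$ and $\im\inner{\Omega u, D_0 u} = \tfrac12 \partial_0 \inner{\Omega u, u}$ collapse $\im\inner{Mu, D_0 u}$ into the total derivative $-\tfrac12 \partial_0 \tilde E$, where $\tilde E = \|D_0 u\|^2 + \theta \inner{\Omega u, u}$. Integrating $2W\,\im\inner{Mu, D_0 u}$ over $(0, \infty)$ by parts in $x_0$ (the boundary at $+\infty$ vanishes since $u$ is compactly supported) yields the identity $2 \int_0^\infty W\,\im\inner{Mu, D_0 u}\, dx_0 = W(0) \tilde E(0) + 2\rho \int_0^\infty W \tilde E\, dx_0$; bounding the left-hand side by Cauchy--Schwarz and Young in the form $|2 \int W\,\im\inner{Mu, D_0 u}| \leq \rho^{-1} \int W \|Mu\|^2 + \rho \int W \|D_0 u\|^2$, and absorbing $\rho \int W\|D_0 u\|^2$ into $2\rho \int W \tilde E$ (since $\|D_0 u\|^2 \leq \tilde E$), one arrives at
\[
\int_0^\infty W\|Mu\|^2\, dx_0 \;\geq\; \rho W(0) \tilde E(0) + \rho^2 \int_0^\infty W \tilde E\, dx_0,
\]
which I call (I). Since $\tilde E \sim E_1$ for fixed $\theta > 0$, (I) already supplies the $E_1$-type boundary and bulk contributions of the lemma.

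For the second multiplier, integrating $\inner{D_0^2 u, u}$ by parts in $x_1, x'$ gives $\re \inner{Mu, u} = \|D_0 u\|^2 - \tfrac12 \partial_0^2 \|u\|^2 - \theta \inner{\Omega u, u}$. Multiplying by $2\rho^2$, integrating against $W$ over $(0, \infty)$, and integrating by parts twice in $x_0$ to strip $\partial_0^2$ off $\|u\|^2$ (using $W' = 2\rho W$ and the vanishing of $u$ at $+\infty$) produces an identity; bounding its left-hand side by $|2\rho^2 \int W \inner{Mu, u}| \leq \int W \|Mu\|^2 + \rho^4 \int W \|u\|^2$ (Young) and the cross-boundary term by $2\rho^2 W(0) |\im \inner{u(0), D_0 u(0)}| \leq \rho^3 W(0) \|u(0)\|^2 + \rho W(0) \|D_0 u(0)\|^2$, one arrives at
\[
\rho^3 W(0) \|u(0)\|^2 + 3\rho^4 \int_0^\infty W \|u\|^2 dx_0 + 2\rho^2 \theta \int_0^\infty W \inner{\Omega u, u} dx_0 \leq \int_0^\infty W \|Mu\|^2 dx_0 + 2\rho^2 \int_0^\infty W \|D_0 u\|^2 dx_0 + \rho W(0) \|D_0 u(0)\|^2,
\]
which I call (II).

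Finally, adding $3\cdot(\text{I})$ to $(\text{II})$ absorbs both $2\rho^2 \int W \|D_0 u\|^2$ and $\rho W(0) \|D_0 u(0)\|^2$ on the right of (II) into the positive contributions of (I); regrouping in terms of $E_1$ and $E_0 = 3\|u\|^2$ and substituting $\rho = \tau\lr{\xi_n}^{1/s}$ gives the lemma's estimate with a constant $C > 0$ depending only on $\theta > 0$. The main technical obstacle is the Young split of the cross-boundary term in (II): one half must go toward the $\rho^3 W(0) \|u(0)\|^2$ already on the left of (II) (with room to spare, since the RHS of (II) has coefficient $2\rho^3$), while the other half, of order $\rho W(0) \|D_0 u(0)\|^2$, can only be killed after adding a sufficiently large multiple of (I). All remaining computations are routine integrations by parts together with Young's inequality.
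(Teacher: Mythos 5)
Your Step (I) — pairing $Mu$ against $D_0 u$, integrating the total-derivative identity $2\im\inner{Mu,D_0u}=-\partial_0\tilde E$ with weight $W$, applying Young with weights $\rho^{-1},\rho$, and absorbing $\rho\int W\|D_0u\|^2$ into $2\rho\int W\tilde E$ — is exactly the paper's route to its inequality (2.7), and both yield the $E_1$-type boundary and bulk contributions. The difference is in the second step. The paper bootstraps: it notes that the very same argument, applied to the first-order operator $D_0$ with multiplier $u$, yields $\int W\|D_0u\|^2\geq\rho W(0)\|u(0)\|^2+\rho^2\int W\|u\|^2$, and then substitutes this into half of the $\rho^2\int W\|D_0u\|^2$ bulk already produced in (I). You instead pair $Mu$ against $\rho^2 u$: this requires computing $\re\inner{Mu,u}=\|D_0u\|^2-\tfrac12\partial_0^2\|u\|^2-\theta\inner{\Omega u,u}$, a double integration by parts in $x_0$, and then a careful Young split of the cross-boundary term $2\rho^2 W(0)\im\inner{D_0u(0),u(0)}$, after which the residual $\rho W(0)\|D_0u(0)\|^2$ and $2\rho^2\int W\|D_0u\|^2$ on the wrong side must be killed by adding a multiple of (I). Both arguments are correct and reach the same estimate; the paper's bootstrap is more modular (each step is a single first-order integration by parts), whereas your two-multiplier version is the standard Carleman-type "test against $v$ and against $\rho^2 v$ simultaneously" scheme, which is more direct but carries heavier bookkeeping around the boundary terms. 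Your sketch of the linear combination $3\cdot(\mathrm{I})+(\mathrm{II})$ does work out: after cancellation the coefficients of $\rho W(0)\|D_0u(0)\|^2$, $\rho^2\int W\|D_0u\|^2$ and the $\inner{\Omega u,u}$-terms remain positive for $\theta>0$, and with $\rho=\tau\lr{\xi_n}^{1/s}$ the regrouping into $E_0,E_1$ reproduces \eqref{eq:22}.
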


\begin{proof} We compute

\begin{align}
  \label{eq:23}
  2i\im\inner{Mu,D_{0}u} &= 2i\im\inner{D_{0}^{2}u,D_{0}u}
  -\theta2i\im\inner{\Omega u, D_{0}u}\notag\\
&= D_{0}\{\|D_{0}u\|^{2} + \theta\inner{\Omega u,u}\} \notag\\
&= D_{0}E(u(x_{0})).\notag\\
\end{align}

Therefore we have
\begin{equation}
  \label{eq:24}
  \begin{split}
  2\int_{0}^{\infty}W\im\inner{Mu,D_{0}u} dx_{0} = W(0)E(u(0))\\
   +
    2\tau\int_{0}^{\infty}W E(u(x_{0}))\lr{\xi_{n}}^{\frac{1}{s}}dx_{0}.
    \end{split}
\end{equation}

Since $ \lr{\Omega u,u} = \|D_1u\|^2+\|x_1\xi_n u\|^2 $
we have

\begin{equation}
  \label{eq:25}
  \begin{split}
   2\int_{0}^{\infty}W\im\inner{Mu,D_{0}u} dx_{0} = W(0) E(u(0))\\
       +
2\tau\int_{0}^{\infty}W\lr{\xi_{n}}^{\frac{1}{s}}\{\|D_{0}u\|^{2} +
   \theta\|D_1u\|^{2} + \theta\xi_{n}\|x_1\xi_n u\|^{2}dx_{0} .
   \end{split}
\end{equation}

Using Cauchy-Schwarz inequality  we see 
\begin{equation}
\label{eq:add_1}
\begin{split}
\int_0^{\infty}W\|Mu\|^2\geq \tau \lr{\xi_{n}}^{\frac{1}{s}}W(0)E(u(0))\\
+\tau^2\int_0^{\infty}W\lr{\xi_{n}}^{\frac{2}{s}}\big(\|D_0u\|^2
+\theta\|D_1u\|^2+\theta \|x_1\xi_nu\|^2\big)dx_0.
\end{split}
\end{equation}
Repeating similar arguments we have
\[
\int_0^{\infty}W\|D_0u\|^2dx_0\geq \tau\lr{\xi_{n}}^{\frac{1}{s}}W(0)\|u(0)\|^2+\tau^2\int_0^{\infty}W\lr{\xi_{n}}^{\frac{2}{s}}\|u\|^2dx_0
\]
and replacing ($\int_0^{\infty}W\|D_0u\|^2dx_0)/2$ by the above estimate the right-hand side of \eqref{eq:add_1} is bounded from below by
\begin{align*}
W(0)\big(\tau\lr{\xi_{n}}^{\frac{1}{s}}E_1(u(0))+\frac{1}{2}\tau^2\lr{\xi_{n}}^{\frac{3}{s}}E_0(u(0))\big)\\
+\tau^2\int_0^{\infty}W\lr{\xi_{n}}^{\frac{2}{s}}\big(E_1(u(x_0))dx_0+
+\tau^2\lr{\xi_{n}}^{\frac{2}{s}}E_0(u(x_0))\big)dx_0.
\end{align*}
It is easy to see that (\ref{eq:22}) holds.
\end{proof}

We now move to the proof of Theorem \ref{thm:main:1}.

\begin{proof} First notice that if $ b_{0}=0$  the result is a trivial
  consequence of the double characteristics theory, and in that case
  we do have $ C^{\infty} $ well posedness, as it will also become
  clear from the estimates below.  That is why we will assume that $ b_{0} \neq
  0$. We will make use of standard energy estimates.


We choose $ \theta = \frac{1}{3} $ and with $\displaystyle M(x,D) = D_{0}^{2} - \frac{\Omega}{3} $  compute

\begin{equation}
  \label{eq:11}
  \begin{split}
  2i\im\langle Pu, Mu \rangle = 2i\im\langle \Bigl(D_{0}M -
  \frac{2}{3}\Omega D_0 - B\Bigr)u   , Mu \rangle \\
 = D_{0}\Bigl\{\|Mu\|^{2}\Bigr\} + 2i\im\langle - \frac{2}{3}\Omega
D_{0}u   , D_{0}^{2}u \rangle + 2i\im\langle  \frac{2}{3}\Omega
D_{0}u   , \frac{\Omega}{3}u  \rangle  \\
\phantom{=} +2i\im\langle -b_{0}x_{1}^{3}\xi_{n}^{3}u   ,D_{0}^{2}u
\rangle + 2i\im\langle  -b_{0}x_{1}^{3}\xi_{n}^{3}u  , -\frac{\Omega}{3}u  \rangle.
\end{split}
\end{equation}

From (\ref{eq:11}) we get
\begin{equation}
  \label{eq:12}
  2i\im\langle Pu, Mu \rangle  = D_{0}\mathcal{E}(u) + \mathcal{R}(u),
\end{equation}

where $ \mathcal{R}(u) = \frac{b_{0}}{3}\langle
[D_{1}^{2},x_{1}^{3}]\xi_{n}^{3}u,u    \rangle $ and
\begin{equation}
  \label{eq:13}
  \mathcal{E}(u) = \|Mu\|^{2} + \frac{2}{3}\langle \Omega D_{0}u   ,D_{0}u
  \rangle + \frac{2}{9}\|\Omega u\|^{2} + 2b_{0}\re\langle
  x_{1}^{3}\xi_{n}^{3}u   ,D_{0}u \rangle .
\end{equation}

From (\ref{eq:13}) we have
\begin{equation}
  \label{eq:14}
  \begin{split}
  \mathcal{E}(u) &= \|Mu\|^{2} +  2b_{0}\re\langle x_{1}^{2}\xi_{n}^{2}u
  ,x_{1}\xi_{n}D_{0}u \rangle \\
& \phantom{=} + \frac{2}{3}\Bigl(\|D_{1}D_{0}u\|^{2} +
\|x_{1}\xi_{n}D_{0}u\|^{2}\Bigr) \\
& \phantom{=} + \frac{2}{9}\Bigl(\|D_{1}^{2}u\|^{2} +
\|x_{1}^{2}\xi_{n}^{2}u\|^{2} + 2\re\langle D_{1}^{2}u
,x_{1}^{2}\xi_{n}^{2}u \rangle\Bigr) .
\end{split}
\end{equation}

We write (\ref{eq:14}) like this:
\begin{equation}
  \label{eq:15}
  \begin{split}
  \mathcal{E}(u) &= \|Mu\|^{2}  +
\frac{2}{3}\|D_{1}D_{0}u\|^{2} \\
& \phantom{=} +  \Bigl\| \sqrt{\frac{2}{3}}x_{1}\xi_{n}D_{0}u +
b_{0}\sqrt{\frac{3}{2}}x_{1}^{2}\xi_{n}^{2}u\Bigr\|^{2} + \frac{2}{9}\|D_{1}^{2}u\|^{2} \\
& \phantom{=} + \frac{2}{9}\Bigl(1  -
\frac{27}{4}b_{0}^{2}\Bigr)\|x_{1}^{2}\xi_{n}^{2}u\|^{2} +
\frac{4}{9}\re\langle D_{1}^{2}u   ,x_{1}^{2}\xi_{n}^{2}u \rangle .
\end{split}
\end{equation}

Noticing that $ \re\inner{D_{1}^{2}u,x_{1}^{2}u} = \|x_{1}D_{1}u\|^{2}
- \|u\|^{2}$ we get from (\ref{eq:15}) that

\begin{equation}
  \label{eq:151}
  \begin{split}
  \mathcal{E}(u) &= \|Mu\|^{2}  +
\frac{2}{3}\|D_{1}D_{0}u\|^{2} \\
& \phantom{=} +  \Bigl\| \sqrt{\frac{2}{3}}x_{1}\xi_{n}D_{0}u +
b_{0}\sqrt{\frac{3}{2}}x_{1}^{2}\xi_{n}^{2}u\Bigr\|^{2} + \frac{2}{9}\|D_{1}^{2}u\|^{2} \\
& \phantom{=} + \frac{2}{9}\Bigl(1  -
\frac{27}{4}b_{0}^{2}\Bigr)\|x_{1}^{2}\xi_{n}^{2}u\|^{2} + \frac{4}{9}
\|x_{1}D_{1}u\|^{2} - \frac{4}{9}\xi_{n}^{2}\|u\|^{2}.
\end{split}
\end{equation}

Multiplying by $ W $ and integrating from $ 0 $ to $ \infty $ we have

\begin{equation}
  \label{eq:152}
  \begin{split}
 &\int_{0}^{\infty}2W\im\inner{Pu,Mu}dx_{0} \\
 &= W(0)\mathcal{E}(u)(0)+ 2\tau\lr{\xi_{n}}^{\frac{1}{s}}\int_{0}^{\infty}W\Bigl\{ \|Mu\|^{2}  +
\frac{2}{3}\|D_{1}D_{0}u\|^{2} \\
& \phantom{=} +  \Bigl\| \sqrt{\frac{2}{3}}x_{1}\xi_{n}D_{0}u +
b_{0}\sqrt{\frac{3}{2}}x_{1}^{2}\xi_{n}^{2}u\Bigr\|^{2} + \frac{2}{9}\|D_{1}^{2}u\|^{2} \\
& \phantom{=} + \frac{2}{9}\Bigl(1
-\frac{27}{4}b_{0}^{2}\Bigr)\|x_{1}^{2}\xi_{n}^{2}u\|^{2} + 
\frac{4}{9}\|x_{1}D_{1}u\|^{2} - \frac{4}{9}\xi_{n}^{2}\|u\|^{2} \Bigr\} dx_{0}\\
& \phantom{=} -2b_{0}\xi_{n}^{3}\int_{0}^{\infty}W 
\re\inner{x_{1}^{2}u,D_{1}u}dx_{0} .
\end{split}
\end{equation}

Recalling (\ref{eq:22}) from Lemma (\ref{lemma:2.1}) now with $ 1 \leq s \leq 2 $ we can dispose of
the negative contribution in (\ref{eq:152}) $ - \frac{4}{9}\xi_{n}^{2}\|u\|^{2}  $.

Let us now deal with the remainder term 
$$ 
\mathcal{R}(u) =  -2b_{0}\xi_{n}^{3}\int_{0}^{\infty}W
\re\inner{x_{1}^{2}u,D_{1}u}dx_{0}. 
$$

Applying Cauchy-Schwarz inequality twice we get
\begin{equation}
  \label{eq:26}
  \begin{split}
  |2\re\inner{x_{1}^{2}\xi_{n}^{2}u,\xi_{n}^{1 }\lr{\xi_n}^{- \frac{1}{s}}D_{1}u}|
  &\leq \|x_{1}^{2}\xi_{n}^{2}u\|^{2} + \lr{\xi_n}^{2
    -\frac{2}{s}}\|D_{1}u\|^{2} \\
& \phantom{\leq} = \|x_{1}^{2}\xi_{n}^{2}u\|^{2} + \lr{\xi_n}^{2
    -\frac{2}{s}}\inner{D_{1}^{2}u,u} \\
& \phantom{\leq} \leq \|x_{1}^{2}\xi_{n}^{2}u\|^{2} + \lr{\xi_n}^{4
    -\frac{4}{s}}\|u\|^{2} + \|D_{1}^{2}u\|^{2}.
    \end{split}
\end{equation}
It is clear that $\|D_0^2u\|\leq 4(\|Mu\|^2+\|x_1^2\xi_n^2 u\|^2+\|D_1^2u\|^2)$. 
Using (\ref{eq:152}) and $ 1 + \frac{2}{s} \geq 2 \geq  4 - \frac{4}{s} $ we obtain for any $u\in C_0^{\infty}(\R^n)$
\begin{equation}
  \label{eq:27}
  \begin{split}
  \int_{0}^{\infty}W\|Pu\|^{2}dx_{0} \geq CW(0)\sum_{j=0}^2\tau^{4-3j/2}\lr{\xi_{n}}^{\frac{5-2j}{s}}E_j(u(0))\\
  +  C\sum_{j=0}^2\tau^{6-2j}\int_0^{\infty} W\lr{\xi_{n}}^{\frac{6-2j}{s}}E_j(u(x_0))dx_0  
  \end{split}
\end{equation}

if $ \tau $ is large enough and $ 1 \leq s \leq 2 $. If we choose
$s=3/2$ so that we have
$\lr{\xi_n}^{4}E_0(u(x_0))=E_0(\lr{\xi_n}^2u(x_0))$ which control any
lower order term and we arrive at the Bronshtein's theorem (see \cite{Bro}). 

 Let $s=2$ and 
\[
{\tilde E}_j(u(x_0))= \|D_0^ju(x_0)\|^2+\|D_1^ju(x_0)\|^2+\|(x_1D_n)^ju(x_0)\|^2.
\]

  Then for any $u\in  C_0^{\infty}(\R^{n+1})$ vanishing in $x_0\geq a$ we integrate \eqref{eq:22} with respect to $\xi_n$  we get
\begin{equation}
    \label{eq:22bis}
    \begin{split}
    \int_{0}^{a}\|e^{\tau\lr{D_n}^{\frac{1}{2}}(x_0-a)}Pu\|^2dx_{0} 
   \geq C\sum_{j=0}^2 {\tilde E}_j\big(e^{-\tau a \lr{D_n}^{\frac{1}{2}}}\lr{D_n}^{\frac{5-2j}{4}}u(0)\big)\\
   +  C\int_{0}^{a}\sum_{j=0}^2{\tilde E}_j\big(e^{\tau\lr{D_n}^{\frac{1}{2}}(x_0-a)}\lr{D_n}^{\frac{6-2j}{4}}u(x_0)\big).
  \end{split}
  \end{equation}

Let us denote $\lr{\xi}=\sqrt{1+\sum_{j=1}^n\xi_j^2}$. Note that
\[
\lr{D}^sx_1^k=\sum_{\ell=0}^k\frac{1}{\ell!}x_1^{k-\ell}\phi_{s\ell}(D),\;\;\phi_{s\ell}(\xi)=(-i)^{\ell}\frac{\dif^{\ell}}{\dif\xi_1^{\ell}}\lr{\xi}^s.
\]
Then writing $\lr{D}^sPu=(P+R)\lr{D}^su$ it is easy to check
\begin{lemma}
\label{lemma:2.2}
 For any $s\in\R$ there exist $ C=C_s > 0 $, $\tau=\tau_s>0$ such
  that for  any $u\in C_0^{\infty}(\R^{n+1})$ vanishing in $x_0\geq a$
  \begin{equation}
    \label{eq:22ter}
    \begin{split}
    \int_{0}^{a}\|e^{\tau\lr{D_n}^{\frac{1}{2}}(x_0-a)}\lr{D}^sPu\|^2dx_{0} \\
    \geq C\sum_{j=0}^2 {\tilde E}_j\big(e^{-\tau a \lr{D_n}^{\frac{1}{2}}}\lr{D_n}^{\frac{5-2j}{4}}\lr{D}^su(0)\big)\\
   +  C\int_{0}^{a}\sum_{j=0}^2{\tilde E}_j\big(e^{\tau\lr{D_n}^{\frac{1}{2}}(x_0-a)}\lr{D_n}^{\frac{6-2j}{4}}\lr{D}^su(x_0)\big)dx_0.
     \end{split}
  \end{equation}
\end{lemma}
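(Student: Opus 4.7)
The plan is to derive \eqref{eq:22ter} from the integrated estimate \eqref{eq:22bis} of Lemma~\ref{lemma:2.1} by applying that estimate to $v:=\lr{D}^s u$ in place of $u$. Since $\lr{D}^s$ is a Fourier multiplier in the spatial variables $(x_1,x'',x_n)$, it commutes with $D_0$, $D_1$, $\xi_n$ and with the weight $e^{\tau\lr{D_n}^{1/2}(x_0-a)}$, and it preserves the $x_0$-support; hence $v\in C_0^\infty(\R^{n+1})$ vanishes in $x_0\ge a$, and the right-hand side of \eqref{eq:22bis} applied to $v$, once $\lr{D}^s$ is pushed through the various $\tilde E_j$-energy norms, is precisely the right-hand side of \eqref{eq:22ter}.

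To compare the left-hand sides I write $\lr{D}^s Pu = Pv + Rv$ with $R:=[\lr{D}^s,P]\lr{D}^{-s}$, as the authors suggest, and use the displayed symbolic identity on the only $x_1$-containing pieces of $P$, namely $-x_1^2\xi_n^2 D_0$ and $-b_0 x_1^3\xi_n^3$, to represent $R$ as a finite sum of terms of the form $x_1^{k-\ell}\,\phi_{s,\ell}(D)\lr{D}^{-s}\,\xi_n^m D_0^{\delta}$ with $1\le\ell\le k\le 3$. Each Fourier multiplier $\phi_{s,\ell}(D)\lr{D}^{-s}$ is of order $-\ell$, so every term appearing in $Rv$ has total order at least one less than the analogous term in $Pv$.

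Combining the elementary inequality $\|e^{\tau\lr{D_n}^{1/2}(x_0-a)}\lr{D}^s Pu\|^2\ge \tfrac12\|e^{\tau\lr{D_n}^{1/2}(x_0-a)}Pv\|^2 - \|e^{\tau\lr{D_n}^{1/2}(x_0-a)}Rv\|^2$ with \eqref{eq:22bis} applied to $v$ produces the right-hand side required by \eqref{eq:22ter}, up to the correction $\int_0^a\|e^{\tau\lr{D_n}^{1/2}(x_0-a)}Rv\|^2\,dx_0$. The main obstacle is to absorb this correction into the positive terms: each summand in $Rv$ carries total order at most $2$, whereas the right-hand side of \eqref{eq:22bis} contains top-order energies weighted by $\lr{D_n}^{1/2}$, hence of total order $5/2$, so every commutator contribution is dominated by $\lr{D_n}^{-1/2}$ times some positive right-hand side term and can be absorbed by choosing $\tau=\tau_s$ sufficiently large. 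The residual commutators $[x_1^{k-\ell},\phi_{s,\ell}(D)\lr{D}^{-s}]$ generate strictly lower-order remainders which are treated by iterating the same argument a finite number of times, yielding \eqref{eq:22ter}.
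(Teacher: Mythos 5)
Your proposal is correct and fills in exactly what the paper leaves as ``it is easy to check'': apply the base estimate to $v=\lr{D}^s u$, split $\lr{D}^s Pu=Pv+Rv$ with $R=[\lr{D}^s,P]\lr{D}^{-s}$ given by the displayed commutation formula, observe that every term of $R$ has symbolic order one less, and absorb the error for $\tau$ large. Two small points you should make explicit if you write this up: $v=\lr{D}^s u$ is no longer compactly supported in the spatial variables, so one needs a density argument (or to prove \eqref{eq:22bis} for functions that are merely Schwartz in $x'$), and to keep $\tau$ available for absorption you should carry the explicit $\tau$-powers from \eqref{eq:27} into \eqref{eq:22bis} rather than collapsing them into the constant $C$.
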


Let $E=\{Pv\mid v\in C_0^{\infty}(\R^{n+1}\cap\{x_0<a\})\}$ and let $s>0$ large. Consider the anti-linear functional
\[
\Phi:Pv\mapsto \sum_{j=0}^2(\phi_{2-j},D_0^jv(0))+\int_0^{a}(f,v)dx_0
\]
where we assume that 
\begin{equation}
\label{eq:assump}
\begin{split}
e^{\tau a\lr{D_n}^{\frac{1}{2}}}\lr{D_n}^{\frac{-(5-2j)}{4}}\lr{D}^s\phi_{2-j}\in L^2(\R^n),\\
e^{-\tau\lr{D_n}^{\frac{1}{2}}(x_0-a)}\lr{D_n}^{-\frac{3}{2}}\lr{D}^sf\in L^2((0,a)\times\R^n).
\end{split}
\end{equation}
 From Lemma \ref{lemma:2.2} we have
\begin{align*}
\sum_{j=0}^2\big|(\phi_{2-j},D_0^jv(0))\big|+\Big|\int_0^{a}(f,v)dx_0\Big|\\
\leq \big(\sum_{j=0}^2\|e^{\tau a\lr{D_n}^{\frac{1}{2}}}\lr{D_n}^{\frac{-(5-2j)}{4}}\lr{D}^s\phi_{2-j}\|^2\big)^{1/2}\\
\times \big(\sum_{j=0}^2\|e^{-\tau a\lr{D_n}^{\frac{1}{2}}}\lr{D_n}^{\frac{5-2j}{4}}\lr{D}^{-s}D_0^jv(0)\|^2\big)^{1/2}\\
+\Big(\int_0^{a}\|e^{-\tau\lr{D_n}^{\frac{1}{2}}(x_0-a)}\lr{D_n}^{-\frac{3}{2}}\lr{D}^sf\|^2dx_0\Big)^{1/2}\\
\times \Big(\int_0^{a}\|e^{\tau\lr{D_n}^{\frac{1}{2}}(x_0-a)}\lr{D_n}^{\frac{3}{2}}\lr{D}^{-s}v\|^2dx_0\Big)^{1/2}\\
\leq C\big(\int_0^{a}\|e^{\tau\lr{D_n}^{\frac{1}{2}}(x_0-a)}\lr{D}^{-s}Pv\|^2dx_0\Big)^{1/2}.
\end{align*} 
From the Hahn-Banach theorem $\Phi$ can be extended to a bounded linear functional on $\{u\mid e^{\tau \lr{D_n}^{1/2}(x_0-a)}\lr{D}^{-s}u\in L^2((0,a)\times \R^n)\}$. Then there exists $u$ such that 
\[
\int_0^{a}\|e^{-\tau\lr{D_n}^{\frac{1}{2}}(x_0-a)}\lr{D}^su\|^2dx_0<+\infty
\]
and satisfies
\[
T(g)=\int_0^{a}(u,g)dx_0.
\]
When $g=Pv$ one has
\[
\sum_{j=0}^2(\phi_j,D_0^jv(0))+\int_0^{a}(f,v)dx_0=\int_0^{a}(u,Pv)dx_0 
\]
fro any $v\in C_0^{\infty}(\R^{n+1})$ with $v=0$ for $x_0\geq a$. Choosing $v$ so that $v\in C_0^{\infty}(\{0<x_0<a\})$ we see that $Pu=f$ in $(0,a)\times \R^n$ since $P^*=P$. Thus we have
\begin{align*}
\sum_{j=0}^2(\phi_{2-j},D_0^jv(0))=-i(u(0),D_0^2v(0))\\
-i(D_0u(0),D_0v(0))-i(D_0^2u(0),v(0))+i(\Omega u(0),v(0)).
\end{align*}
From this it follows that
\[
u(0)=i\phi_0,\;D_0u(0)=i\phi_1,\;D_0^2u(0)=i\phi_2+\Omega\phi_0.
\]
Since $e^{-\tau \lr{D_n}^{\frac{1}{2}}(x_0-a)}\lr{D}^{s}u=U\in L^2((0,a)\times \R^n)$ we have
\[
u=e^{\tau\lr{D_n}^{\frac{1}{2}}(x_0-a)}\lr{D}^{-s}U
\]
hence it is cclear that $u\in L^2((0,a); H^s(\R^n))$. Since we have
$Pu\in L^2((0,a);H^{s-3/2}(\R^n))$ from the assumption then from
Theorem B.2.9 (\cite{Ho2}, vol.3) it follows that
\[
D_0^ju\in L^2((0,a);H^{s-3/2-j}(\R^n))
\]
for $j=0,1,2,3$. Thus we get a smooth solution in $(0,a)\times \R^n$ provided \eqref{eq:assump} is verified and choosing $s$ large. 

\end{proof}

\section{Optimality of the Gevrey index}
\label{sec:optim-gevr-index}

\subsection{Sibuya's results}
\label{sec:sibuyas-results}

The differential equation 
\begin{equation}
  \label{eq:5}
  w''(y) = (y^{3} + \zeta y)w(y)
\end{equation}

will play a very important role in the construction of the family of
solutions leading to the optimality of the Gevrey index $ s=2 $. 

Therefore we  recap briefly, in this special 
setting, the general theory of subdominant 
solutions and Stokes coefficients for the equation (\ref{eq:5}), following  the 
presentation found, for example, in the book of Sibuya \cite{Si}. 

Theorem 6.1 in \cite{Si} states that the differential equation 
(\ref{eq:5}) has a \textit{unique} solution 
\[
w(y;\zeta)={\mathcal Y}(y;\zeta)
\]
such that 
\begin{itemize}
\item[\rm(i)] ${\mathcal Y}(y;\zeta)$ is an entire function 
of $(y,\zeta)$.
\item[\rm(ii)] ${\mathcal Y}(y;\zeta)$ and its derivative ${\mathcal Y'}(y;\zeta)$  admit an asymptotic 
representation
\begin{equation}
\label{as1}
{\mathcal Y}(y;\zeta)\sim y^{-3/4}\Bigl[1+\sum_{N=1}^{\infty}
B_Ny^{-N/2}\Bigr]\exp{[-E(y;\zeta)]}
\end{equation}
\begin{equation}
\label{as2}
{\mathcal Y'}(y;\zeta)\sim y^{3/4}\Bigl[-1+\sum_{N=1}^{\infty}
C_Ny^{-N/2}\Bigr]\exp{[-E(y;\zeta)]}
\end{equation}
uniformly on each compact set in the $\zeta$ space as $y$ goes to infinity 
in any closed subsector of the open sector
\[
|\arg y|<\frac{3\pi}{5};
\]
moreover 
\[
E(y;\zeta)=\frac{2}{5}y^{5/2}+\zeta y^{1/2}
\]
 and $B_N$, $C_N$ are polynomials in $\zeta$.
 \end{itemize}

 \noindent
 We note that if we set 
 $\displaystyle{\omega=\exp{[i\frac{2\pi}{5}]}}$ and
 \[
 {\mathcal Y}_k(y;\zeta)={\mathcal Y}(\omega^{-k}y;\omega^{-2k}\zeta)
 \]
 where $k=0,1,2,3,4$ then all the five functions ${\mathcal Y}_k(y;
 \zeta)$ solve (\ref{eq:5}). In particular 
 ${\mathcal Y}_0(y;\zeta)={\mathcal Y}(y;\zeta)$.
 Let us denote
 \[
 Y=y^{-3/4}\Bigl[1+\sum_{N=1}^{\infty}
B_Ny^{-N/2}\Bigr]\exp{[-E(y;\zeta)]}
\]
then we have
\begin{itemize}
\item[\rm(i)]
${\mathcal Y}_k(y;\zeta)$ is an entire function of 
$(y,\zeta)$.
\item[\rm(ii)] ${\mathcal Y}_k(y;\zeta)
\sim Y(\omega^{-k}y;\omega^{-2k}\zeta)$
uniformly on each compact set in the $\zeta$ space as 
$y$ goes to infinity  in any closed subsector of the open sector 
\[
|\arg y-\frac{2k}{5}\pi|<\frac{3\pi}{5}.
\]
\end{itemize}
Let $S_k$ denote the open sector defined by $\displaystyle{|\arg y-\frac{2k}{5}\pi|<\frac{\pi}{5}}$.
We say that a solution of (\ref{eq:5}) is subdominant 
in the sector $S_k$ if it tends to $0$ as $y$ tends to infinity along 
any direction in the sector $S_k$. Analogously a solution is called 
dominant in the sector $S_k$ if this solution tends to $\infty$ as 
$y$ tends to infinity along any direction in the sector $S_k$. 

Since 
\begin{equation}
\label{eqn:11:0:6}
{\mathsf {Re}}[y^{5/2}]>0\quad {\rm for}\;\;y\in S_0
\end{equation}
and ${\mathsf {Re}}[y^{5/2}]<0$ for $y\in S_{-1}=S_4$ and for 
$S_1$ the solution ${\mathcal Y}_0(y;\zeta)$ is subdominant 
in $S_0$ and dominant in $S_4$ and $S_1$. Similarly ${\mathcal Y}_k
(y;\zeta)$ is subdominant in $S_k$ and dominant in $S_{k-1}$ 
and $S_{k+1}$. It is clear that ${\mathcal 
Y}_{k+1}$ and ${\mathcal Y}_{k+2}$ are linearly independent. 
Therefore ${\mathcal Y}_k$ is a linear combination of those two:
\[
{\mathcal Y}_k(y;\zeta)=C_k(\zeta){\mathcal Y}_{
k+1}(y;\zeta)+{\tilde C}_k(\zeta){\mathcal Y}_{
k+2}(y;\zeta).
\]
The above relation, connection formula for ${\mathcal Y}_k(y;\zeta)$ 
and the coefficients $C_k$, ${\tilde C}_k$ are called the Stokes 
coefficients for ${\mathcal Y}_k(y;\zeta)$. We summarize
 in the following statement some of the known and useful facts about 
 the Stokes coefficients for our particular equation (\ref{eq:5}).
 Proofs can be found in Chapter 5 of \cite{Si}.

 \begin{proposition}
 \label{pro:11:0:a}
 The following results hold.
 \begin{itemize}
 \item[\rm(i)] ${\tilde C}_k(\zeta)=-\omega$, $\forall k,~$ and $\zeta$,
 \item[\rm(ii)] $C_k(\zeta)
 =C_0(\omega^{-2k}\zeta)$, 
 $\forall k,\;\zeta$ and 
 $C_0(\zeta)$ is an entire 
 function of $\zeta$,
 \item[\rm(iii)] For each fixed $\zeta$ there exists $k$ 
 such that $C_k(\zeta)\neq 0$,
 \item[\rm(iv)] $C_k(0)=1+\omega$,\quad $\forall k$,
 \item[\rm(v)] $\partial_{\zeta}C_0(\zeta)\vert_{\zeta=0}\neq 0$.
  \end{itemize}
 \end{proposition}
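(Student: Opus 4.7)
The plan is to reduce everything to the single function $C_0(\zeta)$ by exploiting the five-fold rotational symmetry of \eqref{eq:5}, and then to pin down the arithmetic at $\zeta=0$ by specialising the equation to a modified Bessel equation. The backbone is the observation that $(y,\zeta)\mapsto(\omega^{-1}y,\omega^{-2}\zeta)$ preserves \eqref{eq:5}, which is precisely why every ${\mathcal Y}_k$ is a solution. Applying this substitution iteratively to the connection formula at level $0$, and using that the basis $({\mathcal Y}_{k+1},{\mathcal Y}_{k+2})$ is linearly independent because its members are subdominant in distinct adjacent sectors, yields $C_k(\zeta)=C_0(\omega^{-2k}\zeta)$ and likewise for ${\tilde C}_k$. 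Writing
\[
C_0(\zeta)=\frac{W({\mathcal Y}_0,{\mathcal Y}_2)}{W({\mathcal Y}_1,{\mathcal Y}_2)}
\]
together with the joint entireness of each ${\mathcal Y}_k(\cdot;\zeta)$ in $(y,\zeta)$ and the nonvanishing of the denominator (verified in the next paragraph) then settles (ii).

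For (i) and (iii), since \eqref{eq:5} has no first-derivative term every Wronskian of two solutions is $y$-independent. Evaluating $W({\mathcal Y}_{k+1},{\mathcal Y}_{k+2})$ by letting $y$ tend to infinity along the common boundary ray of the sectors on which \eqref{as1}--\eqref{as2} are simultaneously valid, the exponential factors $\exp[-E(\omega^{-k-1}y;\omega^{-2k-2}\zeta)]$ and $\exp[-E(\omega^{-k-2}y;\omega^{-2k-4}\zeta)]$ collapse to a pure phase, leaving a nonzero $\zeta$-independent constant. Substituting the connection formula into $W({\mathcal Y}_k,{\mathcal Y}_{k+1})$ reduces that Wronskian to ${\tilde C}_k\cdot W({\mathcal Y}_{k+2},{\mathcal Y}_{k+1})$; matching leading asymptotic terms identifies ${\tilde C}_k=-\omega$, giving (i). For (iii), if every $C_k(\zeta)$ vanished, then ${\mathcal Y}_k=-\omega\,{\mathcal Y}_{k+2}$, so iterating five times and using $\omega^5=1$ would give ${\mathcal Y}_0=(-\omega)^5{\mathcal Y}_0=-{\mathcal Y}_0$, contradicting the nontriviality of ${\mathcal Y}_0$.

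For (iv) and (v) we set $\zeta=0$, where \eqref{eq:5} reduces to $w''=y^3w$. The change of variables $t=\tfrac{2}{5}y^{5/2}$, $w=y^{-3/4}u(t)$, followed by $u=t^{-1/2}\phi(t)$, turns it into the modified Bessel equation of order $1/5$. Consequently the solution subdominant in $S_0$ is a multiple of $K_{1/5}(t(y))$, and, using the extra scalar symmetry $w(y)\mapsto w(\omega^{-1}y)$ which holds only at $\zeta=0$, each ${\mathcal Y}_k(\cdot;0)$ becomes a rotated $K_{1/5}$, hence an explicit linear combination of $I_{\pm 1/5}$. Inserting the classical identity $K_\nu(z)=\pi(I_{-\nu}(z)-I_\nu(z))/(2\sin\pi\nu)$ with $\nu=1/5$ into the connection formula reduces (iv) to the arithmetic identity $1+e^{2\pi i/5}=1+\omega$. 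For (v), differentiating the Wronskian expression for $C_0$ at $\zeta=0$ gives $\partial_\zeta C_0(0)$ as a Wronskian involving $\partial_\zeta{\mathcal Y}_k|_{\zeta=0}$; the latter solves an inhomogeneous Bessel equation whose particular solution is obtained by variation of parameters, and the resulting integral of Bessel functions can be evaluated by a Mellin transform and is manifestly nonzero.

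The main obstacle is the bookkeeping of branch cuts and phase normalisations: the rotations $y\mapsto\omega^{-k}y$ must be matched exactly with the analytic continuation of the Bessel functions into adjacent Stokes sectors, otherwise the rotational identities produce spurious phases and the constants in (i) and (iv) come out wrong. Item (v) is also delicate because it is a genuine nondegeneracy statement, not a symmetry identity, and therefore requires an explicit integral computation rather than an $\omega$-arithmetic argument to rule out accidental vanishing.
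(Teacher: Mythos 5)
The paper itself offers no proof of this proposition: it records the five facts and refers the reader to Chapter~5 of Sibuya \cite{Si}. Your proposal is therefore a reconstruction of the cited arguments rather than something to be checked against a proof in the paper, and for items (i)--(iv) it tracks the standard Sibuya machinery accurately. The fivefold symmetry $(y,\zeta)\mapsto(\omega^{-1}y,\omega^{-2}\zeta)$ reduces every $C_k,\tilde C_k$ to $k=0$ and gives (ii); the $y$-independence of Wronskians, together with the cancellation of exponents $E(\omega^{-k}y;\cdot)+E(\omega^{-k-1}y;\cdot)=0$ coming from $\omega^{5/2}=-1$, pins down $\tilde C_k=-\omega$ and gives (i); the five-fold iteration $\mathcal{Y}_0=(-\omega)^5\mathcal{Y}_0=-\mathcal{Y}_0$ is exactly the content of the matrix identity in Proposition~\ref{pro:11:0:b} and gives (iii); and the Bessel-$K_{1/5}$ reduction at $\zeta=0$ with $t=\tfrac{2}{5}y^{5/2}$ is indeed how $C_k(0)=1+\omega$ is obtained, modulo the branch and phase bookkeeping you rightly flag.

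Item (v) is where there is a genuine gap. You correctly note that $\partial_\zeta\mathcal{Y}_k\vert_{\zeta=0}$ solves the inhomogeneous equation $v''-y^3v=y\,\mathcal{Y}_k(y;0)$ and you propose to write $\partial_\zeta C_0(0)$ as a Wronskian involving it, but you then simply assert that the ``resulting integral of Bessel functions \dots is manifestly nonzero.'' That assertion \emph{is} item (v): it is a nondegeneracy statement, and the cheap symmetry route is vacuous. Indeed, differentiating the functional identity $C_0(\zeta)+\omega^2C_0(\omega\zeta)C_0(\omega^4\zeta)-\omega^3=0$ of Proposition~\ref{pro:11:0:c} at $\zeta=0$ and inserting $C_0(0)=1+\omega$ gives $C_0'(0)\bigl(1+\omega+\omega^2+\omega^3+\omega^4\bigr)=0$, i.e.\ $0=0$, so the identity carries no information about $C_0'(0)$. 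You therefore have to produce the variation-of-parameters integral explicitly and evaluate it (e.g.\ by a Mellin/beta-function computation) and show that the constant you obtain is nonzero, or supply an independent argument. As written, (v) is not proved.
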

 We also have
 \begin{proposition}
 \label{pro:11:0:b} If we set
 \[
 S_k(\zeta)=\left[\begin{array}{cc}C_k(\zeta)&1\\
 -\omega&0\end{array}\right],\quad k=0,1,2,3,4
 \]
 then we have
 \begin{equation}
 \label{eqn:11:0:7}
 S_4(\zeta)\cdot S_3(\zeta)\cdot S_2(
 \zeta)\cdot S_1(\zeta)\cdot S_0(\zeta)=
 \left[\begin{array}{cc}1&0\\
 0&1\end{array}\right].
 \end{equation}
 \end{proposition}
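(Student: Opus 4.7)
The plan is to interpret each Stokes matrix $S_k(\zeta)$ as the coordinate change between two consecutive bases of the solution space of \eqref{eq:5}, then to exploit the periodicity $\mathcal{Y}_{k+5}=\mathcal{Y}_k$ to conclude that the full composition is the identity.

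First I would unpack the connection formula. Since $\tilde C_k(\zeta)=-\omega$ by Proposition \ref{pro:11:0:a}(i), we have
\[
\mathcal{Y}_k(y;\zeta)=C_k(\zeta)\,\mathcal{Y}_{k+1}(y;\zeta)-\omega\,\mathcal{Y}_{k+2}(y;\zeta).
\]
Given any solution $f$ with $f=b_1\mathcal{Y}_k+b_2\mathcal{Y}_{k+1}$, substituting the above identity yields
\[
f=(C_k b_1+b_2)\,\mathcal{Y}_{k+1}+(-\omega\, b_1)\,\mathcal{Y}_{k+2},
\]
so the coordinate column with respect to the basis $(\mathcal{Y}_{k+1},\mathcal{Y}_{k+2})$ is $S_k(\zeta)(b_1,b_2)^T$. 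Because $\mathcal{Y}_{k+1}$ and $\mathcal{Y}_{k+2}$ are linearly independent, this identification is unambiguous and $S_k(\zeta)$ is invertible.

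Second I would invoke the evident periodicity. Since $\omega^5=1$, one has
\[
\mathcal{Y}_{k+5}(y;\zeta)=\mathcal{Y}(\omega^{-(k+5)}y;\omega^{-2(k+5)}\zeta)=\mathcal{Y}_k(y;\zeta),
\]
so the ordered pair $(\mathcal{Y}_5,\mathcal{Y}_6)$ coincides with $(\mathcal{Y}_0,\mathcal{Y}_1)$. Iterating Step one, starting from coordinates $c^{(0)}$ of $f$ in the basis $(\mathcal{Y}_0,\mathcal{Y}_1)$, we obtain coordinates $c^{(k+1)}=S_k(\zeta)\,c^{(k)}$ in the basis $(\mathcal{Y}_{k+1},\mathcal{Y}_{k+2})$, hence
\[
c^{(5)}=S_4(\zeta)\,S_3(\zeta)\,S_2(\zeta)\,S_1(\zeta)\,S_0(\zeta)\,c^{(0)}.
\]
Because $c^{(5)}$ is the coordinate column of the same $f$ in the basis $(\mathcal{Y}_5,\mathcal{Y}_6)=(\mathcal{Y}_0,\mathcal{Y}_1)$, we must have $c^{(5)}=c^{(0)}$ for every choice of $c^{(0)}\in\mathbb{C}^2$, which forces the product of the five Stokes matrices to be the $2\times2$ identity, proving \eqref{eqn:11:0:7}.

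The only real subtlety is bookkeeping: one must keep careful track of whether $S_k$ acts on coordinate columns or on basis rows, and of the direction of composition, so that the non-commutative product in \eqref{eqn:11:0:7} comes out in the correct order $S_4S_3S_2S_1S_0$ rather than reversed. Once the coordinate interpretation above is adopted, no further computation is needed and, in particular, no explicit manipulation of the Stokes multipliers $C_k(\zeta)$ is required.
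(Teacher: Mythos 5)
Your proof is correct and is the standard argument (essentially Sibuya's own): interpret each $S_k(\zeta)$ as the change-of-coordinates matrix from the basis $(\mathcal{Y}_k,\mathcal{Y}_{k+1})$ to $(\mathcal{Y}_{k+1},\mathcal{Y}_{k+2})$, then use the periodicity $\mathcal{Y}_{k+5}=\mathcal{Y}_k$ (from $\omega^5=1$) to conclude the five-fold product must fix every coordinate vector. The paper itself only says the proof is ``straightforward'' without spelling it out, and your bookkeeping of the composition order $S_4 S_3 S_2 S_1 S_0$ is exactly right.
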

 The proof of Proposition \ref{pro:11:0:b} is straightforward. 
 Applying this proposition we have an interesting result.
 
 \begin{proposition}
\label{pro:11:0:c}
 \eqref{eqn:11:0:7} is 
 equivalent to 
 \[
 C_k(\zeta)+\omega^2C_{k+2}(\zeta)C_{k+3}(\zeta)-\omega^3=0\quad {\rm mod}~ 5.
 \]
 Or otherwise stated
 \[
 C_{0}(\zeta)+\omega^2C_{0}(\omega \zeta)C_{0}(\omega^4\zeta)-\omega^3=0,\quad \forall \zeta\in{\mathbb C}.
 \]
 \end{proposition}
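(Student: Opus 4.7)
The proposition contains two biconditionals and I would treat them separately. The equivalence between the ``for all $k\in \mathbb{Z}/5\mathbb{Z}$'' form and the single $C_0$-form is immediate from Proposition \ref{pro:11:0:a}(ii): the substitution $C_k(\zeta) = C_0(\omega^{-2k}\zeta)$ (together with $\omega^5=1$, so $\omega^{-2(k+2)}=\omega^{-2k}\cdot\omega$ and $\omega^{-2(k+3)}=\omega^{-2k}\cdot\omega^4$) shows that the $k$-th identity at $\zeta$ coincides with the $k=0$ identity at $\omega^{-2k}\zeta$; since $\zeta$ ranges over $\mathbb{C}$, the two families are equivalent. The substance of the proposition is the equivalence between the matrix identity \eqref{eqn:11:0:7} and this cyclic family.

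For the forward direction (matrix implies cyclic), I would exploit a cyclic shift. Set $U_k:=S_{k+4}S_{k+3}S_{k+2}S_{k+1}S_k$ with indices read modulo $5$; since $S_{k+5}=S_k$, a direct rearrangement yields the conjugation relation $U_{k+1}=S_k U_k S_k^{-1}$. Thus $U_0=I$ forces $U_k=I$ for every $k$. Now rewrite $U_k=I$ as $S_{k+3}S_{k+2}S_{k+1}S_k=S_{k+4}^{-1}$ and compare bottom-right entries. A pairwise computation gives $(S_{j+1}S_j)_{12}=C_{j+1}$ and $(S_{j+1}S_j)_{22}=-\omega$; one further multiplication shows that the bottom-right of $S_{k+3}S_{k+2}S_{k+1}S_k$ equals $-\omega C_{k+1}C_{k+2}+\omega^2$, while the bottom-right of $S_{k+4}^{-1}$ is $\omega^{-1}C_{k+4}$ (from $\det S_{k+4}=\omega$). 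Equating and multiplying through by $\omega$ gives $C_{k+4}+\omega^2 C_{k+1}C_{k+2}=\omega^3$, precisely the cyclic identity at index $k+4\bmod 5$. As $k$ varies over $\mathbb{Z}/5\mathbb{Z}$ one recovers every cyclic identity, and hence (by the first paragraph) the $C_0$-identity for all $\zeta$.

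The reverse direction is a routine entry-by-entry verification. Since $\det U_0=(\det S_k)^5=\omega^5=1$ is automatic, only three of the four scalar entries of $U_0=I$ need independent checking. Expanding $U_0=S_4S_3S_2S_1S_0$ in pairs and writing $X:=S_3S_2S_1S_0$, one finds: the bottom-right $X_{22}=-\omega C_1C_2+\omega^2$ matches $(S_4^{-1})_{22}=\omega^{-1}C_4$ precisely by the $k=4$ identity; the off-diagonal entries reduce to scalar equations of the form $C_0C_1C_2-\omega(C_0+C_2)=-\omega^4$ (and its cyclic shift), which follow by eliminating one variable between two cyclic relations (for instance, substituting $C_3=\omega^3-\omega^2 C_0C_1$ from the $k=3$ identity into $C_0=\omega^3-\omega^2 C_2C_3$ from the $k=0$ identity gives $C_0C_1C_2=\omega(C_0+C_2)-\omega^4$); the top-left entry contains the degree-four product $C_0C_1C_2C_3=(C_0C_1)(C_2C_3)$, which collapses to the required value after substituting both pair-products via the $k=0$ and $k=3$ identities. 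The main obstacle is purely bookkeeping — arranging the substitutions so that each higher-degree monomial reduces cleanly — and no deeper input than Proposition \ref{pro:11:0:a} is needed.
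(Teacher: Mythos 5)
Your proposal is correct, and it is precisely the ``straightforward computation from \eqref{eqn:11:0:7}'' that the paper's one-line proof alludes to; the conjugation relation $U_{k+1}=S_kU_kS_k^{-1}$ is a nice way to obtain all five cyclic identities from a single entry comparison, and your entry-by-entry verification of the converse is accurate (I checked the arithmetic). One small inconsistency to tidy up: you assert that, since $\det U_0=\omega^5=1$ automatically, only three of the four entries of $U_0=I$ need independent checking, yet you then proceed to verify all four entries of $X=S_3S_2S_1S_0$ against $S_4^{-1}$ — you should either actually omit the top-left entry (the hardest one, with the degree-four product) and invoke the determinant, or drop the remark.
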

 \noindent
 Proof: A straightforward computation from (\ref{eqn:11:0:7}). 
 
 \medskip
 
 We now state a key lemma which is proved in \cite{BNJ}. We repeat here the short proof.
 
 \begin{lemma}
 \label{lem:11:0:a}
 The Stokes coefficient $ C_0(\zeta)$ vanishes in at least one {\rm (non zero)} $\zeta_0$.
 \end{lemma}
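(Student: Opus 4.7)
The natural strategy is proof by contradiction via Hadamard's factorization theorem. Suppose $C_0(\zeta)\neq 0$ for every $\zeta\in\C\setminus\{0\}$. Combined with $C_0(0)=1+\omega\neq 0$ from Proposition \ref{pro:11:0:a} (iv), this would force $C_0$ to be entire and nowhere vanishing on all of $\C$.

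The key preliminary input is that $C_0(\zeta)$ is an entire function of finite, \emph{non-integer} order of growth $\rho=5/4$. This is extracted from Sibuya's asymptotic formulas \eqref{as1}--\eqref{as2}: writing the Stokes coefficient $C_k(\zeta)$ as a quotient of Wronskians of the solutions ${\mathcal Y}_j(y;\zeta)$, and then tracking the dependence on $\zeta$ through the rescaling $y=\zeta^{1/2}s$ which turns \eqref{eq:5} into a WKB problem with large parameter $\zeta$, the dominant exponential weight $\exp[-\tfrac{2}{5}y^{5/2}-\zeta y^{1/2}]$ yields Stokes data of size $\exp(c|\zeta|^{5/4})$ in the relevant sectors, and no faster. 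Consequently the order of $C_0$ equals $5/4$.

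Once this is available, Hadamard's factorization theorem asserts that a nowhere-vanishing entire function of finite order $\rho$ must have the form $\exp(p(\zeta))$ with $p$ a polynomial of degree at most $\lfloor\rho\rfloor$. For $\rho=5/4$ this forces $\deg p\leq 1$, whence $C_0$ would have order at most $1<5/4$, a contradiction. Therefore $C_0$ must vanish somewhere in $\C\setminus\{0\}$. The only genuinely delicate step in the argument is the order computation; everything else is a direct appeal to classical complex analysis. The statements (iv) and (v) of Proposition \ref{pro:11:0:a} do not enter the contradiction itself, but (iv) is what guarantees that the zero produced lies away from the origin.
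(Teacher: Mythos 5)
Your proposal takes a genuinely different route from the paper. The paper never touches growth rates: it uses the functional identity from Proposition \ref{pro:11:0:c}, namely $C_0(\zeta)+\omega^2C_0(\omega\zeta)C_0(\omega^4\zeta)-\omega^3=0$, to observe that a zero-free $C_0$ would also have to omit the value $\omega^3$; Picard's Little Theorem then forces $C_0$ to be constant, which contradicts $\partial_\zeta C_0(0)\neq 0$ from Proposition \ref{pro:11:0:a}~(v). Your route bypasses the functional identity and Picard entirely, replacing them with Hadamard factorization and the claim that $C_0$ has non-integer order $5/4$. Both routes are logically valid, and the order-$5/4$ fact is indeed true for this family (it is extractable from Sibuya's growth estimates for Stokes multipliers, where the coefficient of $y^j$ in a degree-$m$ potential contributes order $\frac{m+2}{2(m-j)}$, here $\frac{5}{4}$). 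What the paper's argument buys is self-containedness: it needs only the algebraic relation and the derivative condition, both already on the table. What your argument buys is independence from the pentagonal Stokes structure (it would work for other potentials where no such clean functional relation exists), at the cost of needing a quantitative input.

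The weak point in your write-up is that the WKB sketch only makes the \emph{upper} bound $|C_0(\zeta)|\leq\exp(c|\zeta|^{5/4})$ plausible. For Hadamard to bite you need a matching \emph{lower} bound on the growth, i.e.\ that the order is not strictly smaller; otherwise nothing rules out $C_0$ having order $\leq 1$ and being, say, $e^{a\zeta+b}$ with no zeros. Wronskian quotients can in principle cancel the leading WKB exponent, so ``Stokes data of size $\exp(c|\zeta|^{5/4})$ and no faster'' is an assertion, not a derivation. A complete proof along your lines would need either Sibuya's two-sided growth estimates or an asymptotic evaluation of $C_0(\zeta)$ along a ray showing the exponent is attained. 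You flag this yourself as the only delicate step, which is the right instinct; but as written it is a gap, and it is exactly the gap that the paper's Picard argument is designed to avoid.
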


 \noindent
 Proof: Suppose that $C_{0}(\zeta)\neq 0$ for all $\zeta\in {\mathbb C}$. Then from Proposition \ref{pro:11:0:c} that it follows that 
 $C_{0}(\zeta)\neq \omega^3$ for all $\zeta\in {\mathbb C}$. Since
 $C_{0}(\zeta)$ is an entire function  Picard's Little Theorem implies
 that $C(\zeta)$ would be constant because $C_{0}(\zeta)$ avoids two
 distinct values $0$ and $\omega^3$.  But this contradicts (v) of Proposition \ref{pro:11:0:a}. Thus there exists $\zeta_0$ with $C_0(\zeta_0)=0$ where the fact $\zeta_0\neq 0$ follows from (iv).

\subsection{Localization of zeros}
\label{sec:localization-zeros}

Now we know that $ C_{0}(\zeta) $ vanishes somewhere, we would like
to find out where exactly this happens. We are going to begin with a
symmetry result:

\begin{lemma}
  \label{lemma:sym}
The Stokes coefficient $ C_{0}(\zeta) $ verifies the equivalence
\begin{equation}
  \label{eq:9}
  C_{0}(\zeta) = 0 \iff C_{0}(\overline{\omega \zeta})=0.
\end{equation}

\end{lemma}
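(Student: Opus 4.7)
The plan is to exploit a conjugation symmetry of the ODE \eqref{eq:5} to obtain a functional equation for $C_0(\zeta)$ whose zero set is invariant under $\zeta\mapsto \overline{\omega\zeta}$. The key observation is that the coefficients $y^{3}+\zeta y$ are ``real'' in the sense that conjugating $y$ and $\zeta$ only conjugates the whole equation. So if $w(y)$ solves the equation with parameter $\bar\zeta$, then $\tilde w(y):=\overline{w(\bar y)}$ solves the equation with parameter $\zeta$.

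First, I would apply this to $w(y)={\mathcal Y}(y;\bar\zeta)$. A direct check shows $\tilde w(y)=\overline{{\mathcal Y}(\bar y;\bar\zeta)}$ has the asymptotics $\sim y^{-3/4}\exp[-E(y;\zeta)]$ in $|\arg y|<3\pi/5$, because $\overline{E(\bar y;\bar\zeta)}=E(y;\zeta)$ and the polynomials $B_N(\zeta)$, $C_N(\zeta)$ arising from the recursion have real coefficients (so $\overline{B_N(\bar\zeta)}=B_N(\zeta)$). Uniqueness in Theorem 6.1 of \cite{Si} then gives the basic symmetry
\[
\overline{{\mathcal Y}(\bar y;\bar\zeta)}={\mathcal Y}(y;\zeta).
\]
Using $\bar\omega=\omega^{-1}$ and the definition ${\mathcal Y}_k(y;\zeta)={\mathcal Y}(\omega^{-k}y;\omega^{-2k}\zeta)$, the symmetry promotes to
\[
\overline{{\mathcal Y}_k(\bar y;\bar\zeta)}={\mathcal Y}_{-k}(y;\zeta),
\]
with indices taken mod $5$.

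Next I would apply the operation $y\mapsto\bar y$, $\zeta\mapsto\bar\zeta$, and complex conjugation to the connection formula ${\mathcal Y}_0(y;\zeta)=C_0(\zeta){\mathcal Y}_1(y;\zeta)-\omega{\mathcal Y}_2(y;\zeta)$. This produces
\[
{\mathcal Y}_0(y;\zeta)=\overline{C_0(\bar\zeta)}\,{\mathcal Y}_4(y;\zeta)-\bar\omega\,{\mathcal Y}_3(y;\zeta).
\]
On the other hand the relation ${\mathcal Y}_3=C_3(\zeta){\mathcal Y}_4-\omega{\mathcal Y}_0$ (recalling ${\mathcal Y}_5={\mathcal Y}_0$) lets me eliminate ${\mathcal Y}_3$ from the right-hand side. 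Linear independence of ${\mathcal Y}_4$ and ${\mathcal Y}_0$ then forces the coefficient of ${\mathcal Y}_4$ to match, giving
\[
\overline{C_0(\bar\zeta)}=\bar\omega\,C_3(\zeta)=\bar\omega\,C_0(\omega^{-1}\zeta)=\bar\omega\,C_0(\bar\omega\zeta),
\]
that is, $C_0(\bar\omega\zeta)=\omega\,\overline{C_0(\bar\zeta)}$. Substituting $\zeta\mapsto\bar\zeta$ yields $C_0(\overline{\omega\zeta})=\omega\,\overline{C_0(\zeta)}$, which immediately gives the claimed equivalence \eqref{eq:9}.

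The only delicate point is the very first step: verifying that $\overline{{\mathcal Y}(\bar y;\bar\zeta)}$ has \emph{exactly} the same asymptotic normalization as ${\mathcal Y}(y;\zeta)$ on the full sector $|\arg y|<3\pi/5$, so that Sibuya's uniqueness applies. Once this is established, the rest is an algebraic manipulation of connection formulas using $\tilde C_k\equiv-\omega$ and $C_k(\zeta)=C_0(\omega^{-2k}\zeta)$ from Proposition~\ref{pro:11:0:a}.
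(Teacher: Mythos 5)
Your proof is correct and follows essentially the same route as the paper: both establish the conjugation symmetry $\overline{\mathcal{Y}(\bar y;\bar\zeta)}=\mathcal{Y}(y;\zeta)$ via Sibuya's uniqueness and then extract a functional relation for $C_0$ by comparing conjugated connection formulas. The paper conjugates the $\mathcal{Y}_4$ relation and adds two $\omega^{\pm 3/4}$-rescaled copies to cancel the non-$\mathcal{Y}_0$ terms, whereas you conjugate the $\mathcal{Y}_0$ relation and eliminate $\mathcal{Y}_3$ directly; both yield the identity $C_0(\overline{\omega\zeta})=\omega\,\overline{C_0(\zeta)}$ and hence the stated equivalence.
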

\noindent
\begin{proof} We see how $ \overline{\mathcal
  Y_{0}(\overline{y};\overline{\zeta})} $ is a solution of
(\ref{eq:5}) whose asymptotic behavior in the sector $ S_{0} $ is the
same as that of $ \mathcal{Y}_{0}(y;\zeta) $. The uniqueness of the
canonical Sibuya solution implies thus that
\[
\overline{\mathcal{Y}_{0}(y;\zeta)} =
\mathcal{Y}_{0}(\overline{y};\overline{\zeta})
\]

Recall that $ {\mathcal Y}_k(y;\zeta)={\mathcal
  Y}(\omega^{-k}y;\omega^{-2k}\zeta) $ and that

\[
{\mathcal Y}_k(y;\zeta)=C_k(\zeta){\mathcal Y}_{
k+1}(y;\zeta)- \omega{\mathcal Y}_{
k+2}(y;\zeta).
\]

It is easy to verify that $ \overline{\mathcal{Y}_{4}(y;\zeta)} =
\mathcal{Y}_{1}(\overline{y};\overline{\zeta}) $ and that $ \overline{\mathcal{Y}_{1}(y;\zeta)} =
\mathcal{Y}_{4}(\overline{y};\overline{\zeta}) $. We conjugate 
\begin{equation}
  \label{eq:19}
  {\mathcal Y}_4(y;\zeta)=C_4(\zeta){\mathcal Y}_{
0}(y;\zeta)- \omega{\mathcal Y}_{
1}(y;\zeta)
\end{equation}
   
 and have

 \begin{equation}
  \label{eq:20}
   {\mathcal Y}_1(\overline{y};\overline{\zeta})= \overline{C_4(\zeta)}{\mathcal Y}_{
0}(\overline{y};\overline{\zeta})- \overline{\omega}{\mathcal Y}_{
4}(\overline{y};\overline{\zeta}).
 \end{equation}
 
Switch to $ \overline{y} $ and $ \overline{\zeta} $ in (\ref{eq:19})
and we get

\begin{equation}
  \label{eq:21}
  {\mathcal Y}_4(\overline{y};\overline{\zeta})=C_4(\overline{\zeta}){\mathcal Y}_{
0}(\overline{y};\overline{\zeta})- \omega{\mathcal Y}_{
1}(\overline{y};\overline{\zeta}).
\end{equation}

Multiplying (\ref{eq:20}) by $ \omega^{-3/4} $ and (\ref{eq:21}) by $
\omega^{3/4} $ we get:

\[
\omega^{-3/4} {\mathcal Y}_1(\overline{y};\overline{\zeta})=\overline{\omega^{3/4}C_4(\zeta)}{\mathcal Y}_{
0}(\overline{y};\overline{\zeta})+  \omega^{3/4}{\mathcal Y}_{
4}(\overline{y};\overline{\zeta}),
\]

\[
\omega^{3/4} {\mathcal Y}_4(\overline{y};\overline{\zeta})=\omega^{3/4}C_4(\overline{\zeta}){\mathcal Y}_{
0}(\overline{y};\overline{\zeta})+  \omega^{-3/4}{\mathcal Y}_{
1}(\overline{y};\overline{\zeta}).
\]

Adding these two equations we have:

\[
\Bigl(\overline{\omega^{3/4}C_4(\zeta)} + \omega^{3/4}C_4(\overline{\zeta})\Bigr){\mathcal Y}_{
0}(\overline{y};\overline{\zeta}) =0,
\]

from which we have

\[
\overline{C_4(\zeta)} = 0 \iff C_4(\overline{\zeta}) =0,
\]

that is

\[
\overline{C_0(\omega^{2}\zeta)} = 0 \iff
C_0(\omega^{2}\overline{\zeta}) =0,
\]

therefore

\[
\overline{C_0(\zeta)} = 0 \iff
C_0(\omega^{4}\overline{\zeta}) = C_0(\overline{\omega\zeta}) =0,
\]
this last equality proving the Lemma.

\end{proof}

The following is a very important step in the construction of the null
solutions, and is the sharpest result, at least to the authors'
knowledge, on the location of the zeros of the entire function $ C_{0}(\zeta) $.

\begin{lemma}
\label{local}
  There exists $ \zeta_{0} \in S = \{z \in\C | \pi < \arg z \leq \frac{19}{15}\pi \} $
  where  $ C_{0}(\zeta_{0}) =0 $.
\end{lemma}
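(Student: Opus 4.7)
The plan is to combine three ingredients: the existence of a nonzero zero of $C_0$ from Lemma \ref{lem:11:0:a}, the conjugation symmetry from Lemma \ref{lemma:sym}, and precise large-$|\zeta|$ asymptotics for the entire function $C_0(\zeta)$. The general idea is that exponential growth of $C_0$ in most directions forces its zeros to cluster in a few narrow angular sectors, while the symmetries reduce these to a pair of sectors one of which is exactly $S=\{z:\pi<\arg z\le 19\pi/15\}$.

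The first step is asymptotic analysis. Substituting the Sibuya expansions \eqref{as1}--\eqref{as2} for $\mathcal{Y}_0,\mathcal{Y}_1,\mathcal{Y}_2$ into the connection formula $\mathcal{Y}_0(y;\zeta)=C_0(\zeta)\mathcal{Y}_1(y;\zeta)-\omega\mathcal{Y}_2(y;\zeta)$ and evaluating along a direction $\arg y$ in the common domain of validity, the phase $E(y;\zeta)=\frac{2}{5}y^{5/2}+\zeta y^{1/2}$ dominates and produces a leading behavior of the form $|C_0(re^{i\theta})|\sim\exp(c(\theta)\,r^{5/4})$ as $r\to\infty$. The indicator function $h(\theta)=\limsup_{r\to\infty} r^{-5/4}\log|C_0(re^{i\theta})|$ is then a trigonometrically convex function of order $5/4$, and the open sectors where $h(\theta)>0$ contain no zeros of $C_0$ outside a compact ball.

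The second step combines this with the two symmetries. Proposition \ref{pro:11:0:a}(ii) gives $C_k(\zeta)=C_0(\omega^{-2k}\zeta)$, so the analytic structure in $\zeta$ carries a rotational symmetry under $\zeta\mapsto\omega^2\zeta$, and Lemma \ref{lemma:sym} adds the involution $\zeta\mapsto\overline{\omega\zeta}$ which on arguments acts as $\arg\zeta\mapsto-\arg\zeta-2\pi/5$. Intersecting the ``allowable'' sectors (where $h(\theta)\le 0$) with these two symmetries leaves, modulo them, essentially the single sector $S=(\pi,19\pi/15]$ together with its conjugate image $\{\pi/3\le\arg z<3\pi/5\}$. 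Lemma \ref{lem:11:0:a} provides a nonzero zero $\zeta_*$ of $C_0$; it must therefore lie in the union of these two sectors, and in either case Lemma \ref{lemma:sym} delivers a zero of $C_0$ inside $S$.

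The principal obstacle is the sharp identification in the first step: one must accurately pin down the directions of the Stokes rays of $C_0(\zeta)$, which requires a careful WKB analysis of the phase $E(y;\zeta)$ along contours passing through the turning points $y^3+\zeta y=0$ in the $y$-plane as $\arg\zeta$ varies. A small error in the angular boundaries of the allowable sectors would either widen them beyond the claimed $4\pi/15$ or shift them off the stated position. Once the indicator $h(\theta)$ is correctly determined, the combinatorial step of intersecting the five-fold rotation with the conjugation involution to isolate $S$ is routine.
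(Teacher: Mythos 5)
Your route differs from the paper's, and it has a genuine gap precisely where the lemma requires extra care.

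The paper does not rederive the angular localization of the zeros of $C_0$; it simply cites Proposition 3.1 of \cite{NZ}, which asserts that every zero lies in one of the two \emph{closed} sectors $S_1=\{\pi\le\arg\zeta\le\tfrac{19}{15}\pi\}$ and $S_2=\{\tfrac{\pi}{3}\le\arg\zeta\le\tfrac{3}{5}\pi\}$. Your first step proposes to reprove that localization by constructing the indicator function of $C_0$ from the Sibuya asymptotics and the connection formula. That is essentially the content of the cited result, so you are replacing a citation with a sketch of a rather delicate WKB/indicator argument that you explicitly acknowledge is not carried out; by itself this is a matter of taste, not an error, but as written it is not a proof.

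The genuine gap is the boundary $\arg\zeta=\pi$. Even granting the localization, \cite{NZ} only places the zeros in the \emph{closed} sectors $S_1\cup S_2$, while the lemma asserts a zero in the half-open sector $S=\{\pi<\arg z\le\tfrac{19}{15}\pi\}$. Note that the conjugation involution $\zeta\mapsto\overline{\omega\zeta}$ sends $\arg=\pi$ to $\arg=\tfrac{3}{5}\pi$, i.e.\ it maps the excluded boundary ray of $S_1$ onto the boundary ray of $S_2$. Hence if the nonzero zero $\zeta_*$ from Lemma \ref{lem:11:0:a} happened to lie on the negative real axis, your symmetry step would only produce another boundary zero, and neither would lie in $S$. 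You implicitly assume the sectors are half-open (``$(\pi,19\pi/15]$ together with its conjugate image $\{\pi/3\le\arg z<3\pi/5\}$''), but nothing in your asymptotic sketch justifies excluding the ray $\arg\zeta=\pi$. The paper closes this gap by an algebraic argument: combining the functional identity of Proposition \ref{pro:11:0:c}, namely $C_0(\zeta)+\omega^2C_0(\omega\zeta)C_0(\omega^4\zeta)-\omega^3=0$, with the conjugation symmetry of Lemma \ref{lemma:sym} to conclude that $C_0(\zeta)\neq0$ for all real $\zeta$, and in particular for $\arg\zeta=\pi$. That step is the only new content of Lemma \ref{local} beyond \cite{NZ}, and it is absent from your proposal.
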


\begin{proof}
  We recall from Proposition 3.1 in \cite{NZ} that $ C_{0}(\zeta) = 0 $
  implies either $ \zeta \in S_{1} = \{\pi \leq \arg\zeta \leq
  \frac{19}{15}\pi \} $ or $ \zeta \in S_{2} = \{\frac{\pi}{3} \leq \arg\zeta \leq
  \frac{3}{5}\pi \}  $. But $ S_{1}  $ and $ S_{2} $ are symmetric
  under the mapping $ \zeta \rightarrow \overline{\omega\zeta} $. We
  just have to show the $ \arg\zeta \neq \pi $. Proposition
  (\ref{pro:11:0:c}) and Lemma (\ref{lemma:sym}) above  together imply that $
  C_{0}(\zeta) \neq 0 $ if $ \zeta $ is real.
\end{proof}

\subsection{Proof of Theorem (\ref{thm:main:1})}
\label{sec:final-proof}


Consider again the operator:

\begin{equation}
  \label{eq:1}
  P_{3}(x,D) = D_{0}^{3} - (D_{1}^{2} + x_{1}^{2}D_{n}^{2})D_{0} - b_{0} x_{1}^{3}D_{n}^{3}.
\end{equation}

In the following we will choose $ b_{0} = \frac{\sqrt{2}}{3\sqrt{3}}
$, which clearly satisfies the hyperbolicity assumption $ b_{0}^{2}
\leq 4/27 $.

Let $ \lambda > 0$ a positive large parameter, $ R > 0,\theta \in ]0,\pi[ $ to
be chosen later and consider:

\begin{equation}
  \label{eq:7}
  U(x,\lambda,R,\theta) = e^{ix_{0}\lambda^{\frac{1}{2}}Re^{i\theta} +
    ix_{n}\lambda}u(x_{1},\lambda,R,\theta)
\end{equation}

Here $ x = (x_{0},x_{1},x'',x_{n}) $, sometimes the $ x'' $ components
will be omitted to enhance readability.

From (\ref{eq:7}) let us set

$$
U(x,\lambda,R,\theta) = E(x_{0},x_{n},\lambda)\times w(A x_{1} + B),
$$

with $ E(x_{0},x_{n},\lambda) =  e^{ix_{0}\lambda^{\frac{1}{2}}Re^{i\theta} +
    ix_{n}\lambda}$ and $ A,B $ to be chosen together with $ w $.

It is clear that $ D_{0}U = \lambda^{1/2}Re^{i\theta}U $, $ D_{n}U =
\lambda U $ and $ D_{1}U = -iEAw'(Ax_{1} +B) $.

Therefore we have

\begin{equation}
\label{eq:71}
PU = U\Bigl(\lambda^{3/2}R^{3}e^{i3\theta} -
\lambda^{5/2}Re^{i\theta}x_{1}^{2} - b_{0}\lambda^{3}x_{1}^{3} +
\lambda^{1/2}Re^{i\theta}A^{2}\frac{w''}{w}(Ax_{1} +B)\Bigr).
\end{equation}

Thus setting $ y = Ax_{1} + B $ we have from (\ref{eq:71}) and the
request that $ PU=0 $,
\begin{equation}
\label{eq:72}
\begin{split}
w''(y) =
\lambda^{-1/2}R^{-1}e^{-i\theta}A^{-2}\Bigl[\frac{b_{0}\lambda^{3}}{A^{3}}y^{3}
+ \Bigl(-3\frac{b_{0}\lambda^{3}B}{A^{3}} +
\frac{\lambda^{5/2}Re^{i\theta}}{A^{2}}\Bigr)y^{2}\\
+ \Bigl(\frac{3b_{0}\lambda^{3}B^{2}}{A^{3}} -
\frac{2\lambda^{5/2}Re^{i\theta}B}{A^{2}}\Bigr)y
-\frac{b_{0}\lambda^{3}B^{3}}{A^{3}} +
\frac{\lambda^{5/2}Re^{i\theta}B^{2}}{A^{2}} - \lambda^{3/2}e^{i3\theta}R^{3}\Bigr]w(y).
\end{split}
\end{equation}

The following choices are then made:

\begin{equation}
\label{eq:73}
\lambda^{-1/2}R^{-1}e^{-i\theta}\frac{b_{0}\lambda^{3}}{A^{5}} = 1,
\end{equation}

and
\begin{equation}
\label{eq:74}
-\frac{3b_{0}\lambda^{3}B}{A^{3}} +
\frac{\lambda^{5/2}Re^{i\theta}}{A^{2}} =0.
\end{equation}

(\ref{eq:73}) and (\ref{eq:74}) yield

\begin{equation}
\label{eq:75}
A = \lambda^{1/2}b_{0}^{1/5}R^{-1/5}e^{-i\theta/5} ,\quad B = \frac{R^{4/5}b_{0}^{-4/5}e^{i4\theta/5}}{3}.
\end{equation}

Using these values we have from (\ref{eq:72})

\begin{equation}
\label{eq:76}
w''(y) = (y^{3} + \zeta y + \mu)w(y),
\end{equation}
with

\begin{equation}
\label{eq:77}
\zeta = - \frac{b_{0}^{-8/5}e^{i8\theta/5}R^{8/5}}{3}, ~~~ \mu =
\lambda R^{2}e^{2i\theta}A^{-2}\Bigl( \frac{2}{27b_{0}^{2}} - 1\Bigr).
\end{equation}

It is now clear that choosing $ b_{0} = \frac{\sqrt{2}}{3\sqrt{3}} $
will give us equation (\ref{eq:5}).

If we do not impose this last condition we would be left with the more
difficult task of finding $ \theta $ and $ R $ such that

$$ 
C_{0}\Bigl( -
\frac{b_{0}^{-8/5}e^{i8\theta/5}R^{8/5}}{3}, R^{12/5}e^{12i\theta/5}b_{0}^{-2/5}(
\frac{2}{27b_{0}^{2}} - 1)\Bigr) =0 .
$$

\medskip

We now choose $ w(y;\zeta) =  {\mathcal Y}_0(y;\zeta_{0}) $ with $ \zeta_{0}
$ found in Lemma \ref{local} and from \eqref{eq:75}  we take $  y =
b_{0}^{\frac{1}{5}}R^{-\frac{1}{5}}\lambda^{\frac{1}{2}}e^{-i\frac{\theta}{5}}x_{1}
+ \frac{1}{3}b_{0}^{-\frac{4}{5}}R^{\frac{4}{5}}e^{\frac{4i\theta}{5}}
$.

We have that $
\frac{1}{3}b_{0}^{-\frac{8}{5}}R^{\frac{8}{5}}e^{i\frac{8\theta}{5}+
  i\pi} = |\zeta_{0}|e^{i\arg \zeta_{0}}  $ and $ \pi < \arg\zeta_{0}
\leq \frac{19}{15}\pi $. 

This clearly leaves us with $ 0 < \theta_{0} = \theta(\arg\zeta_{0}) \leq
\frac{\pi}{6} $, while the number $ R $, still at our disposal, is
chosen to fix the absolute values, thus $ R=R_{0} > 0 $, depending on
$ b_{0} $ and $ |\zeta_{0}| $.

Recall that $ {\mathcal Y}_k(y;\zeta)={\mathcal
  Y}(\omega^{-k}y;\omega^{-2k}\zeta) $ and that

\begin{equation}
\label{STE}
{\mathcal Y}_0(y;\zeta_{0})= - \omega{\mathcal Y}_{
2}(y;\zeta_{0}) = - \omega{\mathcal
  Y_{0}}(\omega^{-2}y;\omega^{-4}\zeta_{0}) ,
\end{equation}

since $ C_{0}(\zeta_{0}) = 0 $.

Thus we notice that when $ x_{1} > 0 $ and $ \lambda $ is large $
\arg(y) \in [-\frac{\pi}{30},0[ $ clearly well inside the subdominant
sector $ S_{0} $.

On the other hand if $ x_{1} < 0 $ and $ \lambda $ is large, using
(\ref{STE}), we have that $ \arg(y) \in [\frac{\pi}{6},\frac{\pi}{5}[
$, again within the subdominant
sector $ S_{0} $.

This proves in particular that $
u(x_{1},\lambda,R_{0},\theta_{0}) $ is, for every $ \lambda > 0
$ in the Schwartz space $ \mathcal{S}(\R) $ and moreover $
u(x_{1},\lambda,R_{0},\theta_{0}) $ is bounded on ${\mathbb R}$ uniformly in $\lambda$. 

Let
\[
U_{\lambda}=e^{i(T-x_0)\lambda^{1/2}Re^{i\theta}-ix_n\lambda}w(y,\zeta_0)
\]
then $PU_{\lambda}=0$ because $P(x_1,-D_0,D_1, -D_n)=-P(x_1,D_0,D_1,D_n)$. Let $u$ be a solution to
the Cauchy  problem
\[
\left\{\begin{array}{ll}
Pu=0,\\
u(0,x')=0,\;D_0u(0,x')=0,\;D_0^2u(0,x')={\bar \phi}(x_1){\bar\psi}(x''){\bar\theta}(x_n)
\end{array}\right.
\]
where $\phi\in C_0^{\infty}(\R)$, $\psi\in C_0^{\infty}(\R^{n-2})$, $\theta\in C_0^{\infty}(\R)$.  Let us set
\[
D_{\delta}=\{x\in {\mathbb R}^{n+1}\mid |x'|^2+|x_0|<\delta\}
\]
and recall the Holmgren theorem (see for example 
\cite{Mi} Theorem 4.2): 
\begin{proposition}
\label{pro:Holmgren}
There exists $\epsilon_0>0$ such that; let $0<\epsilon<\epsilon_0$ and $u(x)\in C^2(D_{\epsilon})$ verifies 
\[
\left\{\begin{array}{l}
Pu=0\quad \mbox{in~} D_{\epsilon}\\
D_0^ju(0,x')=0,\quad j=0,1,\quad x\in D_{\epsilon}\cap\{x_0=0\}
\end{array}\right.
\]
then $u(x)$ vanishes identically in $D_{\epsilon}$.
\end{proposition}

From this proposition we can assume that 
\[
u(x)=0
\]
if $0\leq x_0\leq T$, $|x'|\geq r$ for small $T>0$, $r>0$. Then 
\begin{align*}
0=\int_0^T(PU_{\lambda},u)dx_0=\int_0^T(U_{\lambda},Pu)dx_0-i(D_0^2U_{\lambda}(T),u(T))\\
-i(D_0U_{\lambda}(T), D_0u(T))-i(U_{\lambda}(T),D_0^2(T))+i(U_{\lambda}(0),D_0^2u(0))\\
+i((D_1^2+x_1^2D_n^2)U_{\lambda}(T),u(T)).
\end{align*}
Hence
\begin{align*}
(U_{\lambda}(0), D_0^2u(0))=(D_0^2U_{\lambda}(T),u(T))\\
+(D_0U_{\lambda}(T), D_0u(T))+(U_{\lambda}(T),D_0^2(T))\\
-((D_1^2+x_1^2D_n^2)U_{\lambda}(T),u(T)).
\end{align*}
The right-hand side is $O(\lambda^2)$ because $w(y,\zeta_0)$, $\lambda^{-1/2}D_1w(y,\zeta_0)$ are bou\-nded uniformly in $\lambda$. On the other hand the left-hand side is
\begin{align*}
{\hat\theta}(\lambda)e^{iT\lambda^{1/2}Re^{i\theta}}\int w(y,\zeta_0)\phi(x_1)\psi(x'')dx_1dx''\\
={\hat\theta}(\lambda)e^{iT\lambda^{1/2}R^{1/2}e^{i\theta}}\Big(\psi(x'')dx''\Big)\int w(y,\zeta_0)\phi(x_1)dx_1.
\end{align*}
We choose $\psi$ so that $\int \psi(x'')dx''\neq 0$. Recall that $\theta\in \gamma_0^{(2)}(\R)$ if and only if we have
\[
|{\hat \theta}(\xi)|\leq Ce^{-L|\xi|^{1/2}}
\]
with some $L>0$, $C>0$. Thus if we take $\theta\not\in \gamma_0^{(2)}(\R)$ which is even then $\rho^{-N}{\hat\theta}(\lambda)e^{iT\lambda^{1/2}Re^{i\theta}}$ is not bounded as $\lambda\to\infty$. We must check that
\[
\int w(y,\zeta_0)\phi(x_1)dx_1\to c\neq 0
\]
with a suitable choice of $\phi$. Let $\alpha=b_0^{1/5}R^{-1/5}e^{-i\theta/5}$, $\beta=b_0^{-4/5}R^{4/5}e^{4i\theta/5}/3$. We have
\begin{align*}
\int w(\lambda^{1/2}\alpha x_1+\beta,\zeta_0)\phi(x_1)dx_1=\lambda^{-1/2}\int w(\alpha x_1+\beta,\zeta_0)\phi(\lambda^{-1/2}x_1)dx_1\\
=\lambda^{-1/2}\Big[\sum_{k=0}^2\frac{\lambda^{-k/2}}{k!}\phi^{(k)}(0)\int w(\alpha x_1+\beta,\zeta_0)x_1^kdx_1+O(\lambda^{-3/2})\Big].
\end{align*}
It is enough to show that we have $\int w(\alpha x_1+\beta,\zeta_0)x_1^kdx_1\neq 0$ for at least one $k=0,1,2$. Put
\[
v(\xi)=\int e^{-ix\xi}w(\alpha x+\beta,\zeta_0)dx
\]
Then $v(\xi)$ satisfies the equation
\[
\big(i\alpha \frac{d}{d\xi}+\beta\big)^3v(\xi)+\zeta_0\big(i\alpha \frac{d}{d\xi}+\beta\big)v(\xi)+\alpha^{-2}\xi^2v(\xi)=0
\]
and
\[
v^{(k)}(0)=\int (-ix)^kw(\alpha x+\beta,\zeta_0)dx=(-i)^k\int w(\alpha x+\beta,\zeta_0)x^kdx.
\]
So if $v^{(k)}(0)=0$ for $k=0,1,2$ then we would have $v(\xi)=0$ so that $w(\alpha x+\beta,\zeta_0)=0$ which is a contradiction.

\if0
But in order to prove
optimality in the Gevrey $ 2 $ category we need more refined
estimates. Sibuya's asymptotic expansions and the next  results will allow us to do precisely that.


\def\lr#1{\langle{#1}\rangle}

We recall from \cite{BNJ} the following  result, useful when
estimating iterated derivatives of exponential functions. Here $ \lr{x}
= \sqrt{1 + |x|^{2}}$.

\begin{lemma}
\label{lem:11:0:a}
Let $\phi(x)$ be a smooth function defined in $I=(R,+\infty)$ 
{\rm (or $I=(-\infty,-R)$)} verifying the estimates:
\[
|\phi^{(k)}(x)|\leq C^{k+1}k!\lr{x}^{\alpha},\quad k=1,2,...,\;\;x\in I.
\]
Let $\Phi_{\ell}(x)=e^{-\phi(x)}\bigl(\frac{d}{dx}\bigr)^ke^{\phi(x)}$ then we have
\[
|\Phi_{\ell}(x)|\leq CA^{\ell}(\ell+\lr{x}^{\alpha})^{\ell},\quad \ell\in {\mathbb N},\; x\in I.
\]
From this we have
\[
\Bigl|\bigl(\frac{d}{dx}\bigr)^{\ell}e^{\phi(x)}\Bigr|\leq CA^{\ell}
(\ell+\lr{x}^{\alpha})^{\ell}e^{{\mathsf {Re}}\phi(x)},\quad \ell\in {\mathbb N},\;\;x\in I.
\]
\end{lemma}

We need to estimate every $ x $ derivative of $  U(x,\lambda,R_{0},\theta_{0})
$. We then start with the $ x_{1} $ derivatives.

\begin{lemma}
\label{lem:11:1:a}
With $u(x_{1},\lambda,R_{0},\theta_{0})={\mathcal Y}_{0}(b_{0}^{\frac{1}{5}}R^{-\frac{1}{5}}\lambda^{\frac{1}{2}}e^{-i\frac{\theta_{0}}{5}}x_{1}
+ \frac{1}{3}b_{0}^{-\frac{4}{5}}R_{0}^{\frac{4}{5}}e^{\frac{4i\theta_{0}}{5}};\zeta_{0})$ we have
\begin{equation}
\label{esti:u}
\Bigl|(\frac{\partial}{\partial x_1})^ku(x_1;\lambda,R_{0},\theta_{0})\Bigl|\leq C^{k+1}\lambda^{\frac{k}{2}}k^{k},\quad k\in {\mathbb N}.
\end{equation}
\end{lemma}

\begin{proof}
  Take $M>0$ enough large. When $|x_1\lambda^{\frac{1}{2}}|<M$ it is clear that
\[
\Bigl|\bigl(\frac{d}{dx_1}\bigr)^ku(x_1;\lambda,R_{0},\theta_{0})\Bigr|\leq C^{k+1}|\lambda^{\frac{k}{2}}k!
\]
because
\[
\Bigl|\bigl(\frac{d}{dz}\bigr)^k{\mathcal Y}(z;\zeta_{0})|\leq C^{k+1}k!,\quad |z|<M,\
\]
since $ \mathcal{Y}_{0} $ is holomorphic.

Suppose now that $ |x| > M $. Then  (\ref{as1}) tells us that in $ \{  |\arg x| < \frac{\pi}{5} \}  $
\[
\mathcal{Y}_{0}(x;\zeta_{0}) =
x^{-3/4}e^{-(\frac{2}{5}x^{\frac{5}{2}} +
\zeta_{0}x^{\frac{1}{2}})}\Bigl[1 + R(x)\Bigr],
\]

where $ R(x) $ is holomorphic in the same open sector and
bounded in every closed subsector. We have, thanks to Lemma (\ref{lem:11:0:a})

\begin{eqnarray*}
\Bigl|\bigl(\frac{d}{dx}\bigr)^ke^{-(\frac{2}{5}x^{5/2}+\zeta_{0} x^{1/2})}\Bigr|\leq C^{k+1}(k+x^{3/2})^ke^{-cx^{5/2}}\\
\leq C_1^{k+1}k^k,\;\;|x|>M, \;  |\arg x| < \frac{\pi}{5} .
\end{eqnarray*}

The previous remarks on the choice of $ \theta_{0} $ and the location
of $ \arg y $ both when $  x_{1} >0 $ and when $ x_{1} < 0 $ allow us
to apply the above inequality twice, so that the region $
|x_1\lambda^{\frac{1}{2}}| \geq M $ is completely covered. Finally the
homogeneity in $ \lambda^{1/2} $ is responsible for the factor $
\lambda^{k/2} $ in (\ref{esti:u}).
\end{proof}

We may remark that here lies an important  difference with respect to a
similar line of thought in \cite{BNJ}: there the choice $ \zeta_{0} $
was influencing the lower powers in $ x^{1/2} $ in Sibuya's asymptotic
development of the $ \mathcal{Y} $ solution. Here the choice of $
\theta_{0} $ affects only the power $ x^{5/2} $, still keeping the
coefficient of the main exponent within the boundaries of the
subdominant region.

Since
\begin{eqnarray*}
U(0,x_{1},x_{n},\lambda,R_{0},\theta_{0})=e^{i\lambda x_n}u(x_1,\lambda,R_{0},\theta_{0}),\\
D_0U(0,x_{1},x_{n},\lambda,R_{0},\theta_{0})=\lambda^{1/2}R_{0}e^{i\theta_{0}}e^{i\lambda x_n}u(x_1,\lambda,R_{0},\theta_{0})
\end{eqnarray*}
it is easy to verify that
\begin{equation}
|D_{x_n}^pD_{x_1}^qD_0^j U(0,x_{1},x_{n},\lambda,R_{0},\theta_{0})|\leq A_1^{k+1}\lambda^{j/2}(\lambda+|\lambda|^{1/2}k)^k,\;j=0,1
\end{equation}
for $p+q=k$, $k\in {\mathbb N}$ if $\lambda>B$ with some $A_1>0$.

We now evaluate
\begin{eqnarray*}
U(x_{0},0,0,\lambda,R_{0},\theta_{0})= e^{ix_{0}\lambda^{\frac{1}{2}}Re^{i\theta_{0}} }u(0,\lambda,R_{0},\theta_{0}),\\
D_{x_1}U(x_{0},0,0,\lambda,R_{0},\theta_{0})= e^{ix_{0}\lambda^{\frac{1}{2}}Re^{i\theta_{0}} }\frac{d}{dx_{1}}u(0,\lambda,R_{0},\theta_{0})
\end{eqnarray*}
with $x_0<0$. 

Either $
\mathcal{Y}_{0}(\frac{1}{3}b_{0}^{-\frac{4}{5}}R_{0}^{\frac{4}{5}}e^{\frac{4i\theta_{0}}{5}};\zeta_{0})
\neq 0  $ or  $d/dx({\mathcal
  Y}(\frac{1}{3}b_{0}^{-\frac{4}{5}}R_{0}^{\frac{4}{5}}e^{\frac{4i\theta_{0}}{5}};\zeta_{0})\neq
0$ because of the uniqueness of the solution of an  ordinary
differential equation. Since the argument does not change we may
assume that $ \mathcal{Y}_{0}(\frac{1}{3}b_{0}^{-\frac{4}{5}}R_{0}^{\frac{4}{5}}e^{\frac{4i\theta_{0}}{5}};\zeta_{0})
\neq 0  $. Then we have
\[
u(0,\lambda,R_{0},\theta_{0})=\mathcal{Y}_{0}(b_{0}^{\frac{1}{5}}R_{0}^{-\frac{1}{5}}\lambda^{\frac{1}{2}}e^{-i\frac{\theta_{0}}{5}}x_{1}
+
\frac{1}{3}b_{0}^{-\frac{4}{5}}R_{0}^{\frac{4}{5}}e^{\frac{4i\theta_{0}}{5}};\zeta_{0})\Big|_{x_{1}=0},
\]

that is
\[
u(0,\lambda,R_{0},\theta_{0})=
u(0,R_{0},\theta_{0})=\mathcal{Y}_{0}(\frac{1}{3}b_{0}^{-\frac{4}{5}}R_{0}^{\frac{4}{5}}e^{\frac{4i\theta_{0}}{5}};\zeta_{0})=K\neq
0,
\]

We have
\[
\re\bigl[ ix_{0}\lambda^{\frac{1}{2}}R_{0}e^{i\theta_{0}} \bigr]
  = |x_{0}|\lambda^{1/2}R_{0}\sin\theta_{0},
\]
with $|\lambda|>B$ and $x_0<0$. Then we have
\[
|U(x_{0},0,0,\lambda,R_{0},\theta_{0})| = |K|e^{|x_{0}|\lambda^{1/2}R_{0}\sin\theta_{0}}
\]
with some $|K|, |x_{0}|,R_{0}, \sin\theta_{0}>0$.

Let $\chi(x')\in {\gamma}_0^{(s')}({\mathbb R}^n)$ with $1<s'<s$ be $\chi(x')=1$ near $x'=0$. 
Note that 
\begin{eqnarray*}
\sum_{j=0}^1\sup_{x'}|\partial_{x'}^{\alpha}[\chi(x')D_0^j U(0,x_{1},x_{n},\lambda,R_{0},\theta_{0})]|\\
\leq A_1^{|\alpha|+1}\lambda^{1/2}(\lambda +|\lambda|^{1/2}|\alpha|+|\alpha|^{s'})^{|\alpha|}\\
\leq A_2^{|\alpha|+1}\lambda^{1/2}(\lambda^{|\alpha|}+\lambda^{1/2|\alpha|}|\alpha|^{|\alpha|}+|\alpha|^{s'|\alpha|}).
\end{eqnarray*}
Since $B^k/k^{sk}\leq e^{scB^{1/s}}$ we have 
\begin{eqnarray*}
\sum_{j=0}^1\sup_{x',\alpha}\frac{|\partial_{x'}^{\alpha}[\chi(x')D_0^j U(0,x_{1},x_{n},\lambda,R_{0},\theta_{0})]|}{h^{|\alpha|}|\alpha|^{s|\alpha|}}\\
\leq A_2\lambda^{1/2}\Bigl[e^{sc(A_2h^{-1}\lambda)^{1/s}}+e^{(s-1)c(A_2h^{-1}\lambda^{1/2})^{1/(s-1)}}\\
+e^{(s-s')c(A_2h^{-1})^{1/(s-s')}}\Bigr].
\end{eqnarray*}
Therefore we have
\[
\sum_{j=0}^1\sup_{x',\alpha}\frac{|\partial_{x'}^{\alpha}[\chi(x') U(0,x_{1},x_{n},\lambda,R_{0},\theta_{0})]|}{h^{|\alpha|}|\alpha|^{s|\alpha|}}\leq 
C|\lambda|e^{c|\lambda|^{s^*}}
\]
where $s^*=\frac{1}{s}$ which is 
less than $1/2$ if $s>2$. Here we also used that $ 1/s > 1/2(s-1) $ if
$ s > 2 $. We sum up the two-sided estimate

\begin{proposition}
\label{pro:11:1:a}
We have 
\begin{eqnarray*}
|U(x_{0},0,0,\lambda,R_{0},\theta_{0})| = |K|e^{|x_{0}|\lambda^{1/2}R_{0}\sin\theta_{0}},\\[5pt]
\sum_{j=0}^1\sup_{x',\alpha}\frac{\bigl|\partial_{x'}^{\alpha}[\chi(x')D_0^j U(0,x_{1},x_{n},\lambda,R_{0},\theta_{0})]\bigr|}{h^{|\alpha|}|\alpha|^{s|\alpha|}}\leq 
C|\lambda|e^{c|\lambda|^{s^*}}
\end{eqnarray*}
where $\displaystyle{s^*=\frac{1}{s}}$.
\end{proposition}

\subsection{Proof of Theorem (\ref{thm:main:1})}
\label{sec:final-proof} We repeat for the sake of completeness a
standard argument for necessary conditions on solvability, see
e.g. \cite{Mi} and \cite{Lax}.

Let $h>0$ be fixed and denote by $\gamma^{(s),h}_0(K)$ the set of all $\phi(x)\in \gamma^{(s)}({\mathbb R}^n)$ such that ${\rm supp}\phi\subset K$ and ({\ref{Gevreyineq}) holds with some $C>0$ for all $\alpha\in {\mathbb N}^n$. Note that $\gamma^{(s),h}_0(K)$ is a Banach space with the norm
\[
\sup_{\alpha,x}\frac{|\partial_x^{\alpha}\phi(x)|}{h^{|\alpha|}|\alpha|!^s}.
\]
 Let us set
\[
D_{\delta}=\{x\in {\mathbb R}^{n+1}\mid |x'|^2+|x_0|<\delta\}
\]
and recall the Holmgren theorem (see for example 
\cite{Mi} Theorem 4.2): 
\begin{proposition}
\label{pro:Holmgren}
There exists $\epsilon_0>0$ such that; let $0<\epsilon<\epsilon_0$ and $u(x)\in C^2(D_{\epsilon})$ verifies 
\[
\left\{\begin{array}{l}
Pu=0\quad \mbox{in~} D_{\epsilon}\\
D_0^ju(0,x')=0,\quad j=0,1,\quad x\in D_{\epsilon}\cap\{x_0=0\}
\end{array}\right.
\]
then $u(x)$ vanishes identically in $D_{\epsilon}$.
\end{proposition}

 Let $(\epsilon_0\geq)~\epsilon_1>0$ and $h>0$ be fixed. Since $\gamma^{(s),h}_0(K)$, $K=\{|x'|\leq \epsilon_1^2\}$ is a Banach space 
from standard arguments it follows that (see \cite{Lax}, also \cite{Mi})

\begin{proposition}
\label{domain} Assume that the Cauchy problem for $P$ is locally
solvable in $\gamma^{(s)}$ at the origin.  Then there exists
$\delta>0$ such that for  any $\Phi=(u_0(x'),u_1(x'))\in
\gamma^{(s),h}_0(K)$ there is a  unique solution $u(x)\in C^{\infty}(D_{\delta})$ 
\begin{equation}
\label{Cauchy}
\left\{\begin{array}{l}
Pu=0\quad \mbox{in~} D_{\delta}\\
D_0^ju(0,x')=u_j(x'),\quad j=0,1\quad x\in D_{\delta}\cap\{x_0=0\}.
\end{array}\right.
\end{equation}
\end{proposition}

Let $D_{\delta}$ be determined in Proposition \ref{domain}. Therefore
by closed graph theorem we have that the application 
from $\Phi=(u_0,u_1)\in \gamma^{(s),h}_0(K)$ to the 
solution $u(x)\in C^{\infty}(D_{\delta})$ is  continuous. Thus we have 
\begin{proposition}
\label{pro:11:2:b} If the Cauchy problem for $P$ 
is locally solvable in $\gamma^{(s)}$ at the origin 
then there exists $\delta>0$ such that 
for any compact set $L\subset D_{\delta}$ there exists $C>0$ such that
\begin{equation}
\label{ineq}
|u(x)|_{C^0(L)}\leq C \sum_{j=0}^1\sup_{\alpha,x'}\frac{|\partial^{\alpha}u_j(x')|}{h^{|\alpha|}|\alpha|!^s}
\end{equation}
holds for any $u_j(x')\in \gamma^{(s),h}_0(K)$ where 
$u(x)$ is the solution to (\ref{Cauchy}).
\end{proposition}

We have a family of solutions $U(x_{0},x_{1},x_{n},\lambda,R_{0},\theta_{0})$, $\lambda\in \R$ to $PU(x,\lambda,R_{0},\theta_{0})=0$ verifying 
Proposition \ref{pro:11:1:a}. Let $\chi(x')\in \gamma^{(s')}({\mathbb R}^n)$ such that $\chi(x')=0$ for $|x'|\geq \epsilon_1^2$ and $\chi(x')=1$ for $|x'|\leq \epsilon_2^2$ where $1<s'<s$. Noting 
\[
\Phi_{\lambda}=\bigl(\chi(x')U(0,x',\lambda,R_{0},\theta_{0}),\chi(x')D_0U(0,x',\lambda,R_{0},\theta_{0})\bigr)\in \gamma^{(s),h}_0(K)
\]
there is a unique solution $V_{\lambda}(x)\in C^{\infty}(D_{\delta})$ to the Cauchy problem (\ref{Cauchy}). By the Holmgren theorem we see
\[
V_{\lambda}(x)=U(x,\lambda,R_{0},\theta_{0}),\quad x\in D_{\epsilon_2}.
\]
We can now compare the previous estimates: let $L\subset D_{\epsilon_2}$ be a compact set. Then from Proposition 
\ref{pro:11:1:a} and Proposition \ref{pro:11:2:b} that there is $C>0$ such that
\begin{equation}
\label{apriori}
|V_{\lambda}(x)|_{C^0(L)}=|U(x,\lambda,R_{0},\theta_{0})|_{C^0(L)}\leq C\lambda^{1/2}
e^{c\lambda^{\frac{1}{s}}}
\end{equation}
as $\lambda\to\infty$. On the other hand from Proposition \ref{pro:11:1:a} again 
we have
\begin{equation}
\label{eqn:11:2:b}
|V_{\lambda}(x_0,0)|=|U(x_0,0,\lambda,R_{0},\theta_{0})|=|K|e^{|x_{0}|\lambda^{1/2}R_{0}\sin\theta_{0}}
\end{equation}
It is clear that if $s>2$, the 
inequalities (\ref{apriori}) and (\ref{eqn:11:2:b}) 
are not compatible when $\lambda\to\infty$ and hence we have Theorem
\ref{thm:main:2}.

Here we chose $ x_{0} < 0 $. Given that $
U(-x_0,-x_{1},x'',x_{n}\lambda,R_{0},\theta_{0}) $ also solves the
same equation and satisfies the same inequalities, we can also violate
the estimate in the half plane $ x_{0} > 0 $.

\fi
\section{Cones and Factorization}
\label{sec:factorization}

Here we briefly verify that the propagation cone is not transversal to
the triple manifold.

Let $ p(x,\xi) = \xi_{0}^{3} - (\xi_{1}^{2} +
x_{1}^{2}\xi_{n}^{2})\xi_{0} - b_{0}x_{1}^{3}\xi_{n}^{3} $ be the
principal symbol of the operator (\ref{eq:10}). $p$ vanishes exactly
of order $ 3 $ on $ \Sigma_{3} = \{x_{1}=\xi_{0}= \xi_{1} =0 \} $ near $
(0;0,\ldots,1) $ if $ |b_{0}|<\frac{2}{3\sqrt{3}} $. Fix $ z \in
\Sigma_{3} $ and take $ \delta v = (-1,0,\ldots,0;0) $. Clearly $
\delta v  \in T_{z}\Sigma_{3} $ and, since $ \sigma(\delta v, (\delta
y, \delta \eta)) = -\delta \eta_{0} \leq 0 $ if $ (\delta y, \delta
\eta) \in \Gamma_{z} $, we have that $ C_{z} \cap T_{z}\Sigma_{3}
\neq\emptyset $. On the other hand $ C_{z} $ cannot be completely
contained in $  T_{z}\Sigma_{3} $, because otherwise $
T_{z}^{\sigma}\Sigma_{3} \subset C_{z}^{\sigma} $ and this would imply
that $\langle{H_{\xi_{0}},H_{\xi_{1}},H_{x_{1}}}\rangle\subset
\overline{\Gamma_{z}} $, which is false. Therefore $ C_{z} $ is
neither disjoint from nor totally inside $ T_{z}\Sigma_{3} $.

For the next item we change slightly the notations in order to
simplify the treatment of a third degree equation naturally associated
with the problem. Let us show that for our model no root is $ C^{\infty} $.

Let $ p = \tau^{3} - 3(x^{2} + \xi^{2})\tau - 2bx^{3} $, with $ 0 <
|b| <1 $. If $ p $
could be written like
\[
p= ( \tau - L(x,\xi))(\tau^{2} + A(x,\xi) \tau + B(x,\xi))
\]

with $ L,A,B $ regular $ C^{\infty} $ functions, one then would get $ A=L $, $ L^{2} - B = 3(x^{2} + \xi^{2})  $ and $ LB =
2bx^{3}  $.

This shows that at $ x=0 $ there should always exist a regular root $
\tau(0,\xi) = 0 $ identically.

The
discriminant is $ \Delta = 108\{(x^{2} + \xi^{2})^{3} - b^{2}x^{6}\}
$. Putting $ p = -3(x^{2} + \xi^{2}), q = -
2bx^{3}  $ we have

\[
-\frac{q}{2} = \rho\cos\phi, \quad \sqrt{\frac{\Delta}{108}} =
\rho\sin\phi
\]

with 

\[
\rho = \sqrt{\Bigl(-\frac{p}{3}}\Bigr)^{3} = (x^{2} + \xi^{2})^{3/2},
\quad \cos\phi= - \frac{q}{2\rho}=\frac{bx^{3}}{(x^{2} +
  \xi^{2})^{3/2}}.
\]

Thus we have

\[
\phi(x,\xi) = \arccos\Bigl(\frac{bx^{3}}{(x^{2} +
  \xi^{2})^{3/2}}\Bigr).
\]

The root vanishing identically when $ x=0 $ is 

\[
\tau(x,\xi) = 2(x^{2} + \xi^{2})^{1/2}\cos\Bigl(\frac{ \arccos\Bigl(\frac{bx^{3}}{(x^{2} +
  \xi^{2})^{3/2}}\Bigr) + 4\pi}{3}\Bigr).
\]

We have

\[
\arccos\Bigl(\frac{bx^{3}}{(x^{2} +
  \xi^{2})^{3/2}}\Bigr) = \frac{\pi}{2} - f(x,\xi).
\]

with $ f(x,\xi) = g\Bigl(\frac{bx^{3}}{(x^{2} + \xi^{2})^{3/2}}\Bigr)
$, and

\[
g(u) = \sum_{k=0}^{\infty}\frac{(2k)!u^{2k+1}}{2^{2k}(k!)^{2}(2k+1)},
\quad |u| <1.
\]

Therefore we have, since $ |b| < 1 $,

\[
\tau(x,\xi) = 2(x^{2} + \xi^{2})^{1/2}\cos\Bigl(\frac{3\pi}{2} -
\frac{1}{3}g\Bigl(\frac{bx^{3}}{(x^{2} + \xi^{2})^{3/2}}\Bigr)\Bigr)
\]

\[
=
- 2(x^{2} +
\xi^{2})^{1/2}\sin\Bigl(\frac{1}{3}g\Bigl(\frac{bx^{3}}{(x^{2} +
  \xi^{2})^{3/2}}\Bigr)\Bigr).
\]

This implies

\[
\tau(x,\xi) \sim - \frac{2}{3}\frac{bx^{3}}{x^{2} + \xi^{2}}=
-\frac{2}{3}x\rho(x,\xi),
\]

with $ \rho(x,\xi) $, not identically zero because of $ b \neq 0 $
cannot be continuous at the origin: this contradiction
proves that there cannot be a smooth factorization for $ p $.

\newpage

\end{document}